\numberwithin{equation}{section}
\newtheorem{defin}{Definition}[section]
\newtheorem{theorem}[defin]{Theorem}
\newtheorem{lemma}[defin]{Lemma}
\newtheorem{proposition}[defin]{Proposition}
\theoremstyle{definition} {\newtheorem{remark}[defin]{Remark}}
\date{\today}
\begin{document}
%\title[Cracks with fractional dimension]
\title{A model for the quasistatic growth of cracks with fractional dimension\tnoteref{t1}\tnoteref{t2}}%
\author[SISSA]{Gianni Dal Maso}
\ead{dalmaso@sissa.it}
\author[SISSA]{Marco Morandotti\corref{cor1}}
\ead{morandot@sissa.it}
\address[SISSA]{SISSA, Via Bonomea, 265, 34136 Trieste, Italy}
\cortext[cor1]{Corresponding author}
\tnotetext[t1]{This paper is dedicated to Nicola Fusco on the occasion of his $60^\mathrm{th}$ birthday.}
\tnotetext[t2]{Preprint SISSA 10/2016/MATE.}

\begin{abstract}
We study a variational model for the quasistatic growth of cracks with fractional dimension in brittle materials.
We give a minimal set of properties of the collection of admissible cracks which ensure the existence of a quasistatic evolution.
Both the antiplane and the planar cases are treated.
\end{abstract}
\maketitle

\noindent {\bf Keywords:} {Fracture mechanics, Griffith's energy criterion, fractional dimension, quasistatic evolution, rate-independent processes, linearized elasticity, energy minimization.}

\smallskip
\noindent {\bf {2010} MSC:} Primary  
49J45, %Methods involving semicontinuity and convergence; relaxation
%%74B05 %Elastic materials --> Classical linear elasticity
%%74G65 %Energy minimization
%%74H %Dynamical problems
Secondary 
74R10, %Fracture and damage --> Brittle fracture
49J40, %Variational methods including variational inequalities
28A78. %Hausdorff and packing measures
%%28A80 %Fractals
%%49J10 %Free problems in two or more independent variables
%%49K20 %Problems involving partial differential equations
%%35J20 %Variational methods for second-order elliptic equations
%}}
%}

\tableofcontents

\section{Introduction}
The current models of linearly elastic fracture mechanics are based on Griffith's energy criterion \cite{Griffith}.
Crack growth is the competition between the elastic energy released when the crack grows and the energy spent to produce new crack.
In the standard case of planar elasticity it is tacitly assumed that cracks open along one-dimensional sets $K$ in the reference configuration $\Omega\subset\R2$, so that it is natural to assume that the energy spent to produce a crack $K$ is proportional to its one-dimensional Hausdorff measure $\cH^1(K)$.
The elastic energy stored in the uncracked region is given by
\begin{equation*}
\cE_\C(u,K):=\frac12\int_{\Omega\setminus K} \C(x)Eu(x){:}Eu(x)\,\de x,
\end{equation*}
where $\C(x)$ is the elasticity tensor, $Eu$ is the symmetrized gradient of the displacement $u\colon\Omega\setminus K\to\R2$, and $:$ denotes the scalar product of $2{\times}2$ matrices.

In \cite{FrancfortMarigo1998} Francfort and Marigo introduced a quasistatic variational model based on these ideas (we refer the reader to \cite{BFM} for a general review on the variational approach to fracture mechanics).
A time-dependent boundary datum $w(t,x)$ is prescribed on a sufficiently regular portion $\partial_D\Omega$ of the boundary $\partial\Omega$. 
In the discrete-time formulation, for every integer $n>1$ one divides the time interval $[0,T]$ by $n+1$ subdivision points $0=t_n^0<t_n^1<\cdots<t_n^n=T$.
Assuming for simplicity that the proportionality constant in the crack energy is equal to $1$, the solution $(u_n^i,K_n^i)$ at time $t_n^i$ is obtained inductively by minimizing the functional
\begin{equation}\label{i002}
\cE_\C^\tot(u,K):=\cE_\C(u,K)+\cH^1(K),
\end{equation}
among all pairs $(u,K)$ such that $K\supset K_n^{i-1}$, $u$ is sufficiently smooth in $\Omega\setminus K$, and $u=w(t_n^i)$ on $\partial_D\Omega\setminus K$.

To implement this minimization scheme it is important to fix the class $\cK$ of admissible cracks $K$ that we consider and to make precise the notion of smoothness for $u$ on $\Omega\setminus K$ (see, \emph{e.g.}, Definitions \ref{222} and \ref{903}).
The existence of solutions to these problems has been obtained in \cite{DalMasoToader2002} for the antiplane case and in \cite{Chambolle2003} for the planar case, assuming that $\cK$ is the set of all compact subsets of $\cl\Omega$ with an \emph{a priori} uniform bound on the number of connected components.
This assumption is crucial to obtain the lower semicontinuity of $\cH^1(K)$ with respect to the Hausdorff distance (see Definition \ref{300}).

In many brittle materials the assumption that $\cH^1(K)<+\infty$, though useful from the mathematical point of view, is not physically justified, see \cite{Bazant,BK,Borodich1,Borodich2,Ponson,Yavari}.

Our aim in this paper is to extend the results in \cite{Chambolle2003,DalMasoToader2002} to the case where the collection $\cK$ of admissible cracks is composed of $\alpha$-dimensional sets, for some $\alpha\in(1,2)$.
We shall see that in this case we cannot take as $\cK$ the collection of all compact subsets of $\cl\Omega$ with a uniform bound on the number of connected components.
The purpose of this paper is to determine which further properties of $\cK$ are needed to obtain the results for $\alpha$-dimensional cracks.

In the discrete-time formulation, the minimum problem \eqref{i002} is replaced by the minimum problem for
\begin{equation}\label{i003}
\cE_\C^\tot(u,K):=\cE_\C(u,K)+\cH^\alpha(K)
\end{equation}
among all pairs $(u,K)$ such that $K\in\cK$, $K\supset K_n^{i-1}$, $u$ is sufficiently smooth in $\Omega\setminus K$, and $u=w(t_n^i)$ on $\partial_D\Omega\setminus K$.

The first difficulty is that $\cH^\alpha$ is not lower semicontinuous with respect to the Hausdorff metric for $\alpha\in(1,2)$, even though $\cK$ is the collection of all the connected compact subsets os $\cl\Omega$.
An immediate counterexample is given by the approximation of an $\alpha$-dimensional connected fractal set $K$ by polygonal pre-fractals $K_n$: in fact, $\cH^\alpha(K_n)=0$ for all $n$, while $\cH^\alpha(K)>0$.

To solve the minimum problem for the energy \eqref{i003}, we assume that the collection $\cK$ of admissible cracks satisfies the following properties:
\begin{enumerate}
\item[(1)] (compactness) $\cK$ is compact for the Hausdorff metric;
\item[(2)] (lower semicontinuity) $\cH^\alpha$ is lower semicontinuous on $\cK$ with respect to the Hausdorff metric.
\end{enumerate}
These properties allow us to tackle the minimum problem \eqref{i003} by the direct method of the calculus of variations (see Proposition \ref{511}).

To prove the existence of a quasistatic evolution based on the incremental minimum problems for \eqref{i003}, we introduce the piecewise constant interpolants $(u_n(t),K_n(t))$ defined by
\begin{equation*}
u_n(t):=u_n^i,\qquad K_n(t):=K_n^i,\qquad\text{for $t\in[t_n^i,t_n^{i+1})$.}
\end{equation*}
As in \cite{DalMasoToader2002} we can apply a version of Helly's Theorem and obtain, for a subsequence independent of $t$ and not relabeled, that $K_n(t)$ converges in the Hausdorff metric to a set $K(t)\in\cK$.
Moreover, there exists a function $u(t)$ such that, for a suitable subsequence (possibly depending on $t$), the functions $Eu_n(t)$ converge to $Eu(t)$ weakly in $L^2$.
As in \cite{Chambolle2003,DalMasoToader2002}, the following property is crucial for the proof of the strong convergence and of the fact that $u(t)$ minimizes $\cE_\C(u,K(t))$ among all sufficiently smooth functions $u$ on $\Omega\setminus K(t)$ such that $u=w(t)$ on $\partial_D\Omega\setminus K(t)$:
\begin{enumerate}
\item[(3)] (connectedness property) there exists $m\geq1$ such that every element of $\cK$ has at most $m$ connected components.
\end{enumerate}
Indeed, this property is the most important hypothesis to obtain the approximation result contained in Lemma \ref{nd100}.

Besides the minimality of $u(t)$ for $\cE_\C(\cdot,K(t))$, we want to prove also that the following \emph{stability condition} holds:
\begin{equation*}
\cE_\C^\tot(u(t),K(t))\leq\cE_\C^\tot(\hat u,\hat K),
\end{equation*}
for every $\hat K\in\cK$ with $\hat K\supset K(t)$ and for every sufficiently smooth function $\hat u$ on $\Omega\setminus \hat K$ such that $\hat u=w(t)$ on $\partial_D\Omega\setminus \hat K$.
To prove this inequality we need an additional property of $\cK$:
\begin{enumerate}
\item[(4)] (extension property) given $H_n,H,K\in\cK$ such that $H_n\to H\subset K$ in the Hausdorff metric, then there exists a sequence $K_n\in\cK$ such that $K_n\to K$ in the Hausdorff metric, $K_n\supset H_n$ for all $n$, and
	\begin{equation*}
	\limsup_{n\to\infty} \cH^\alpha(K_n\setminus H_n)\leq\cH^\alpha(K\setminus H).
	\end{equation*}
\end{enumerate}

Assuming (1)-(4), our main result (Theorem \ref{228b}) is the existence, under suitable assumptions on the initial and boundary conditions, of a quasistatic evolution $t\mapsto(u(t),K(t))$ satisfying the following properties: 
\begin{itemize}
\item[(a)] (irreversibility) $K(s)\subset K(t)$ for $0\leq s\leq t\leq T$;
\item[(b)] (global stability) for $0\leq t\leq T$ we have $K(t)\in\cK$, $u(t)$ is sufficiently smooth on $\Omega\setminus K(t)$, $u(t)=w(t)$ on $\partial_D\Omega\setminus K(t)$, and $\cE_\C^{\tot}(u(t),K(t))\leq\cE_\C^{\tot}(\hat u,\hat K)$ for all $\hat K\in\cK$ with $\hat K\supset K(t)$ and for all $\hat u$ sufficiently smooth on $\Omega\setminus \hat K$ and with $\hat u=w(t)$ on $\partial_D\Omega\setminus \hat K$;
\item[(c)] (energy balance) $t\mapsto\cE_\C^{\tot}(u(t),K(t))$ is absolutely continuous on $[0,T]$, and
\begin{equation*}
\frac{\de}{\de t}\cE_\C^{\tot}(w(t),K(t))=\int_{\Omega\setminus K(t)} \C(x)Eu(t,x){:}E\dot w(t,x)\,\de x \quad \text{ for a.e. } t\in[0,T],
\end{equation*}
where $\dot w$ is the partial derivative of $w$ with respect to time.
\end{itemize}

This notion of quasistatic evolution falls within the general variational framework for rate-independent systems developed in \cite{MielkeRoubicek}.
It is clear that the solution strongly depends on the choice of the collection $\cK$, which encodes the (possibly fractal) geometry of all possible cracks for the brittle material considered.

Similar problems where all sets $K\in\cK$ are contained in a single compact set $\Sigma\subset\cl\Omega$ with $\cH^\alpha(\Sigma)<+\infty$ have been studied in \cite{RaccaToader2014} with a more direct approach.
They can easily provide examples of collections $\cK$ satisfying (1)-(4).
Section \ref{sect3} of the present paper is devoted to constructing an example of a collection $\cK$ satisfying (1)-(4), which is also invariant under rotations.
The difficulties encountered in this construction show that, unlike for $\alpha=1$, it seems that there is no ``canonical'' collection $\cK$ of $\alpha$-dimensional compact sets when $\alpha\in(1,2)$.
The actual choice of $\cK$ in the fracture model will depend on the physical properties of the brittle material.

In the first part of the paper we present in detail the results for the antiplane case, where the displacement $u$ is scalar-valued and the energy depends on the gradient $\nabla u$ of $u$.
In this case, the results are proved for a more general class of energies which are not necessarily quadratic (see Definition \ref{302}).
In Section \ref{sectlinel} we outline the main differences in the case of planar elasticity, which can be treated using the approximation result proved in \cite{Chambolle2003}.

\section{Preliminaries and main result in the scalar case}
Let $\Omega\subset\R2$ be a bounded connected open set with Lipschitz boundary $\partial\Omega$.
Let $\partial_N\Omega$ be a fixed (possibly empty) closed subset of $\partial\Omega$ with a finite number of connected components, and let $\partial_D\Omega:=\partial\Omega\setminus\partial_N\Omega$.
Notice that $\partial_D\Omega$ is a relatively open set with a finite number of connected components.
\begin{defin}\label{300}
The \emph{Hausdorff distance} between two non-empty compact sets $K_1,K_2\subset\cl\Omega$ is defined by
\begin{equation*}
d_\cH(K_1,K_2):=\max\Big\{\max_{x\in K_1}\dist{x}{K_2},\max_{x\in K_2}\dist{x}{K_1}\Big\}.
\end{equation*}
Given a sequence $K_n$ of non-empty compact subsets of $\cl\Omega$ and a non-empty compact set $K\subset\cl\Omega$, we say that $K_n$ converges to $K$ in the Hausdorff metric if $d_\cH(K_n,K)\to0$.
\end{defin}
Here and in the sequel we consider a summability exponent $1<p\leq2$.
\begin{defin}[Deny-Lions space]\label{222}
Given an open subset $U$ of $\Omega$, the Deny-Lions space is defined by
\begin{equation*}
L^{1,p}(U):=\{u\in L^p_\loc(U): \nabla u\in L^p(U;\R2)\},
\end{equation*}
where $\nabla u$ denotes the gradient of $u$ in the sense of distributions.
\end{defin}
It is known that $L^{1,p}(U)$ coincides with the usual Sobolev space $W^{1,p}(U)$ if $\partial U$ is Lipschitz \cite[Sect.\@ 1.1.11]{Mazya2011}.
More precisely, if $\partial U$ is Lipschitz near a point $x\in\partial U$ and $U$ lies locally on one side of $\partial U$, then there exists an open neighborhood $V$ of $x$ such that $u\in W^{1,p}(U\cap V)$.
This allows us to define the trace of a function $u\in L^{1,p}(U)$ on the locally Lipschitz part of the boundary $\partial U$.

We now introduce the (possibly nonquadratic) energy in the scalar case.
Let $f\colon\Omega{\times}\R2\allowbreak\to[0,+\infty)$ be a function with the following properties:
\begin{subequations}\label{233}
\begin{eqnarray}
&& \text{for every $\xi\in\R2$, $f(\cdot, \xi)$ is measurable in $\Omega$;} \label{233a} \\
&& \text{for a.e.\@ $x\in\Omega$, $f(x,\cdot)$ is of class $C^1$ and strictly convex in $\R2$.} \label{233b}
\end{eqnarray}
We assume that there exist constants $0<c \leq C<+\infty$ and functions $a\in L^1(\Omega)$ and $b\in L^{q}(\Omega)$, $1/p+1/q=1$, such that
\begin{equation}\label{233c}
c|\xi|^p-a(x)\leq f(x,\xi)\leq C|\xi|^p+a(x)\quad \text{for every $x\in\Omega$, $\xi\in\R2$}.
\end{equation}
\begin{equation}\label{233d}
|\partial_\xi f(x,\xi)|\leq C|\xi|^{p-1}+b(x)\quad \text{for every $x\in\Omega$, $\xi\in\R2$},
\end{equation}
\end{subequations}
where $\partial_\xi f$ denotes the partial gradient with respect to $\xi$.

\begin{defin}[Bulk energy and admissible displacements]\label{302}
Let $K$ be a compact subset of $\cl\Omega$.
For every $u\in L^{1,p}(\Omega\setminus K)$, the bulk energy we will consider is defined by
\begin{equation*}
\cE_f(u,K):=\int_{\Omega\setminus K} f(x,\nabla u)\,\de x
\end{equation*}
Given $w\in W^{1,p}(\Omega)$, the set of admissible displacements determined by $w$ and $K$ is defined by
\begin{equation*}
\cA(w,K):=\{u\in L^{1,p}(\Omega\setminus K), u=w \text{ on } \partial_D\Omega\setminus K\},
\end{equation*}
where the equality on $\partial_D\Omega\setminus K$ is in the sense of traces.
\end{defin}
\begin{remark}\label{2321}
By properties \eqref{233}, the direct method of the calculus of variations implies that the functional $\cE_{f}(\cdot,K)$ has a minimizer in $\cA(w,K)$ and, by \eqref{233b}, any two minimizers have the same gradient.
\end{remark}

We fix once and for all $\alpha\in(1,2)$.
In our model, we assume that the energy dissipated by a crack $K$ (compact subset of $\cl\Omega$) is given by $\cH^\alpha(K)$.
It is convenient to introduce the total energy
\begin{equation}\label{307a}
\cE_{f}^{\tot}(u,K):=\cE_{f}(u,K)+\cH^\alpha(K).
\end{equation}

The following definition collects the properties of the collections of admissible cracks we are going to consider.
\begin{defin}[Admissible cracks]\label{238}
Let $\cK$ be a collection of compact subsets of $\cl\Omega$.
We say that $\cK$ is \emph{$\alpha$-admissible} if the following properties hold:
\begin{enumerate}
\item[(1)] (compactness) $\cK$ is compact for the Hausdorff metric;
\item[(2)] (lower semicontinuity) $\cH^\alpha$ is lower semicontinuous on $\cK$ with respect to the Hausdorff metric, i.e., $\displaystyle\cH^\alpha(K)\leq\smash{\liminf_{n\to\infty}}\,\cH^\alpha(K_n)$ whenever $K_n\to K$ in the Hausdorff metric;
\item[(3)] (connectedness property) there exists $m\geq1$ such that every element of $\cK$ has at most $m$ connected components;
\item[(4)] (extension property) given $H_n,H,K\in\cK$ such that $H_n\to H$ in the Hausdorff metric and $H\subset K$, then there exists a sequence $K_n$ in $\cK$ such that $K_n\to K$ in the Hausdorff metric, $K_n\supset H_n$ for all $n$, and
	\begin{equation}\label{220}
	\limsup_{n\to\infty} \cH^\alpha(K_n\setminus H_n)\leq\cH^\alpha(K\setminus H).
	\end{equation}
\end{enumerate}
\end{defin}

Let us fix $T>0$.
We are now ready to give the definition of quasistatic evolution corresponding to a time-dependent boundary datum $w\in AC([0,T];W^{1,p}(\Omega))$ on $\partial_D\Omega$.
\begin{defin}[Quasistatic evolution]\label{221a}
Let $f$ satisfy \eqref{233}, let $w\in AC([0,1]; W^{1,p}(\Omega))$, and let $\cK$ be an $\alpha$-admissible collection of compact subsets of $\cl\Omega$.
An \emph{irreversible quasistatic evolution of minimum energy configurations for $\cE_{f}^{\tot}$} corresponding to these data is a function $t\mapsto(u(t),K(t))$ satisfying the following conditions:
\begin{itemize}
\item[(a)] (irreversibility) $K(s)\subset K(t)$ for $0\leq s\leq t\leq T$;
\item[(b)] (global stability) for $0\leq t\leq T$ we have $K(t)\in\cK$, $\cH^\alpha(K(t))<+\infty$, $u(t)\in\cA(w(t),K(t))$, and $\cE_f^{\tot}(u(t),K(t))\leq\cE_f^{\tot}(\hat u,\hat K)$ for all $\hat K\in\cK$ with $\hat K\supset K(t)$ and for all $\hat u\in\cA(w(t),\hat K)$;
\item[(c)] (energy balance) $t\mapsto\cE_f^{\tot}(u(t),K(t))$ is absolutely continuous on $[0,T]$ and
\begin{equation*}
\frac{\de}{\de t}\cE_f^{\tot}(w(t),K(t))=\langle\partial_\xi f(x,\nabla u(t)1_{\Omega\setminus K(t)}),\nabla \dot w(t)\rangle\quad \text{ for a.e. } t\in[0,T].
\end{equation*}
\end{itemize}
\end{defin}
Here and in the rest of the paper $\langle\cdot,\cdot\rangle$ denotes the duality product between $L^q$ and $L^p$.
For every set $A\subset\R2$ the characteristic function $1_A$ is defined by $1_A(x)=1$ if $x\in A$ and $1_A(x)=0$ if $x\notin A$.
If $v$ is a function defined on $A$, the product $v1_A$ denotes the extension of $v$ to zero outside of $A$.

The following theorem establishes an existence result for a quasistatic evolution for the energy $\cE_f^\tot$.
\begin{theorem}\label{228a}
Let $f$, $w$, and $\cK$ be as in Definition \ref{221a}, let $K_0\in\cK$ with $\cH^\alpha(K_0)<+\infty$, and let $u_0\in\cA(w(0),K_0)$.
Assume that 
\begin{equation*}
\cE_f^{\tot}(u_0,K_0)\leq\cE_f^{\tot}(\hat u,\hat K)\quad \text{for all $\hat K\in\cK$ with $\hat K\supset K_0$ and for all $\hat u\in\cA(w(0),\hat K)$.}
\end{equation*}
Then there exists an irreversible quasistatic evolution $t\mapsto(u(t),K(t))$ for $\cE_f^{\tot}$ such that $K(0)=K_0$ and $u(0)=u_0$.
\end{theorem}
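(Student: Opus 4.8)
The plan is to run the time-incremental (discrete-time) scheme of \cite{DalMasoToader2002,Chambolle2003}, invoking the four properties of the $\alpha$-admissible collection $\cK$ exactly where each is needed. \textbf{Step 1 (incremental minimization, a priori bounds).} For every $n$ subdivide $[0,T]$ by $t_n^i:=iT/n$, set $(u_n^0,K_n^0):=(u_0,K_0)$, and define inductively $(u_n^i,K_n^i)$ as a minimizer of $\cE_f^{\tot}(u,K)$ over all $K\in\cK$ with $K\supset K_n^{i-1}$ and all $u\in\cA(w(t_n^i),K)$. Existence is Proposition~\ref{511}, which uses (1), (2), the coercivity and convexity of $f$ in \eqref{233}, and the approximation Lemma~\ref{nd100} (itself based on (3)) to transfer admissible displacements to the Hausdorff limit of a minimizing sequence of cracks. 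Testing the $i$-th problem with $\big(u_n^{i-1}+w(t_n^i)-w(t_n^{i-1}),\,K_n^{i-1}\big)$ and using a first-order Taylor expansion of $f$, the bound \eqref{233d}, and $w\in AC([0,T];W^{1,p}(\Omega))$, one gets
\[
\cE_f^{\tot}(u_n^i,K_n^i)\ \le\ \cE_f^{\tot}(u_n^{i-1},K_n^{i-1})\ +\ \int_{t_n^{i-1}}^{t_n^i}\big\langle\partial_\xi f(x,\nabla u_n^{i-1}1_{\Omega\setminus K_n^{i-1}}),\nabla\dot w(s)\big\rangle\,\de s\ +\ \omega_n^i,
\]
with $\sum_i|\omega_n^i|\to0$; summing and using the lower bound in \eqref{233c} gives bounds on $\cE_f^{\tot}(u_n^i,K_n^i)$, on $\|\nabla u_n^i\|_{L^p}$, and on $\cH^\alpha(K_n^i)$ uniform in $i$ and $n$. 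Set $u_n(t):=u_n^i$ and $K_n(t):=K_n^i$ for $t\in[t_n^i,t_n^{i+1})$.

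\textbf{Step 2 (compactness, identification and minimality of $u(t)$).} The maps $t\mapsto K_n(t)$ are nondecreasing with values in the compact metric space $(\cK,d_\cH)$, so a generalized Helly theorem (as in \cite{DalMasoToader2002}) yields a subsequence, not relabeled, and a nondecreasing $t\mapsto K(t)$ with $K_n(t)\to K(t)$ in the Hausdorff metric for every $t$; here (1) gives $K(t)\in\cK$, monotonicity of Hausdorff limits under inclusion gives irreversibility (a), one has $K(0)=K_0$, and (2) gives $\cH^\alpha(K(t))\le\liminf_n\cH^\alpha(K_n(t))<+\infty$. Fix $t$. Along a further, possibly $t$-dependent, subsequence $\nabla u_n(t)1_{\Omega\setminus K_n(t)}$ converges weakly in $L^p(\Omega;\R2)$; using Lemma~\ref{nd100} and the connectedness property (3) one identifies the limit as $\nabla u(t)1_{\Omega\setminus K(t)}$ with $u(t)\in\cA(w(t),K(t))$ (this is where (3) is essential, to keep the Dirichlet condition on $\partial_D\Omega\setminus K(t)$ in the limit), and one shows $u(t)$ minimizes $\cE_f(\cdot,K(t))$ on $\cA(w(t),K(t))$: any competitor $v$ is approximated by $v_n\in\cA(w(t_n^i),K_n(t))$ with $\cE_f(v_n,K_n(t))\to\cE_f(v,K(t))$ by Lemma~\ref{nd100}, and comparison with the incremental minimality of $u_n(t)$, together with weak lower semicontinuity of the bulk energy (convexity of $f$, all cracks being Lebesgue-null since $\alpha<2$) and (2) for $\cH^\alpha$, gives $\cE_f(u(t),K(t))\le\cE_f(v,K(t))$. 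Since the minimizer's gradient is unique (Remark~\ref{2321}), the weak limit is subsequence-independent, so $\nabla u_n(t)1_{\Omega\setminus K_n(t)}\rightharpoonup\nabla u(t)1_{\Omega\setminus K(t)}$ along the whole Helly subsequence; minimality then forces $\cE_f(u_n(t),K_n(t))\to\cE_f(u(t),K(t))$, and strict convexity of $f$ upgrades this, together with the weak convergence of the gradients, to strong convergence in $L^p$, hence $\partial_\xi f(x,\nabla u_n(t)1_{\Omega\setminus K_n(t)})\to\partial_\xi f(x,\nabla u(t)1_{\Omega\setminus K(t)})$ strongly in $L^q$. Also $\nabla u(0)=\nabla u_0$, so one may take $u(0)=u_0$.

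\textbf{Step 3 (global stability).} Fix $t$, $\hat K\in\cK$ with $\hat K\supset K(t)$, and $\hat u\in\cA(w(t),\hat K)$. Apply the extension property (4) with $H_n:=K_n(t)$, $H:=K(t)$, $K:=\hat K$ to get $\hat K_n\in\cK$ with $\hat K_n\supset K_n(t)$, $\hat K_n\to\hat K$ in the Hausdorff metric, and $\limsup_n\cH^\alpha(\hat K_n\setminus K_n(t))\le\cH^\alpha(\hat K\setminus K(t))$. For $t\in[t_n^i,t_n^{i+1})$ we have $\hat K_n\supset K_n(t)\supset K_n^{i-1}$, so $\hat K_n$ is admissible in the $i$-th incremental problem; approximating $\hat u$ by $\hat u_n\in\cA(w(t_n^i),\hat K_n)$ with $\cE_f(\hat u_n,\hat K_n)\to\cE_f(\hat u,\hat K)$ (Lemma~\ref{nd100}) and using additivity of $\cH^\alpha$ (the sets $K_n(t)$ and $\hat K_n\setminus K_n(t)$ being disjoint), the minimality of $(u_n(t),K_n(t))$ gives $\cE_f(u_n(t),K_n(t))\le\cE_f(\hat u_n,\hat K_n)+\cH^\alpha(\hat K_n\setminus K_n(t))$. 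Letting $n\to\infty$, using $\liminf_n\cE_f(u_n(t),K_n(t))\ge\cE_f(u(t),K(t))$ and the two $\limsup$ estimates, then adding $\cH^\alpha(K(t))$ and using $\cH^\alpha(\hat K)=\cH^\alpha(K(t))+\cH^\alpha(\hat K\setminus K(t))$, yields $\cE_f^{\tot}(u(t),K(t))\le\cE_f^{\tot}(\hat u,\hat K)$, which is (b).

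\textbf{Step 4 (energy balance) and main difficulty.} Summing the discrete inequality of Step 1 up to the index with $t\in[t_n^i,t_n^{i+1})$, letting $n\to\infty$, and using the strong $L^q$ convergence of the momenta $\partial_\xi f(x,\nabla u_n(\cdot)1_{\Omega\setminus K_n(\cdot)})$ from Step 2 (combined with dominated convergence in time, justified by \eqref{233d} and $\dot w\in L^1([0,T];W^{1,p})$, and with the refinement of the partitions), one obtains the upper energy inequality
\[
\cE_f^{\tot}(u(t),K(t))\ \le\ \cE_f^{\tot}(u_0,K_0)\ +\ \int_0^t\big\langle\partial_\xi f(x,\nabla u(s)1_{\Omega\setminus K(s)}),\nabla\dot w(s)\big\rangle\,\de s.
\]
For the reverse inequality, fix $s<t$ and test the global stability (b) at time $s$ with the competitor $\hat K:=K(t)\supset K(s)$ and $\hat u:=u(t)+w(s)-w(t)\in\cA(w(s),K(t))$, then use a first-order Taylor expansion of $f$ and integrate in the two time variables. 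The two inequalities give the energy equality, whence $t\mapsto\cE_f^{\tot}(u(t),K(t))$ is absolutely continuous (the integrand being in $L^1(0,T)$), and differentiating, together with the Euler--Lagrange equation satisfied by the minimizer $u(t)$, yields the identity in (c) for $\frac{\de}{\de t}\cE_f^{\tot}(w(t),K(t))$. I expect the genuinely delicate points — the new ones compared with the case $\alpha=1$ — to be the use of the extension property (4) in Step 3 to build competitor cracks that contain $K_n(t)$ without producing spurious $\cH^\alpha$-measure in the limit, and the strong-convergence argument in Step 2, which propagates through Steps 3 and 4 and relies entirely on the approximation Lemma~\ref{nd100}, hence on the connectedness property (3).
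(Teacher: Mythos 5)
Your proposal is correct and follows essentially the same route as the paper: time discretization with the incremental problems of Proposition \ref{511}, Helly's theorem for the cracks, the convergence-of-minimizers result (your Step 2 in effect re-derives Theorem \ref{DMT5.1}, including the strict-convexity upgrade from weak to strong convergence), the extension property (4) to build competitor cracks for global stability, and the standard two-sided discrete energy estimates for the energy balance. The only points you gloss over are routine and are handled in the paper by truncation: Lemma \ref{nd100} requires competitors in $W^{1,p}(\Omega\setminus K)$, whereas elements of $\cA(w,K)$ are only in $L^{1,p}$, so one first replaces $v$ by $(v\wedge(w+k))\vee(w-k)$ before approximating.
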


\section{Examples of $\boldsymbol{\alpha}$-admissible collections of compact sets}\label{sect3}
In this section we construct a class of $\alpha$-admissible compact subsets of $\cl\Omega$ which are not contained in a prescribed crack path.
As a matter of fact, the class $\cK$ that we are going to construct will be invariant under rotations.

We fix a simple curve with Hausdorff dimension $\alpha\in(1,2)$ represented as the image of an injective $1/\alpha$-H\"older continuous function $\gamma$. 
To be precise, we assume that $\gamma\colon[0,\ell]\to\R2$ is continuous, that $\gamma(0)=0$, that
\begin{equation}\label{100}
c_\gamma|s_1-s_2|^{1/\alpha}\leq|\gamma(s_1)-\gamma(s_2)|\leq C_\gamma|s_1-s_2|^{1/\alpha} \qquad\text{for all $s_1,s_2\in[0,\ell]$}
\end{equation}
for suitable constants $C_\gamma>c_\gamma>0$, and that
\begin{equation}\label{101}
\cH^\alpha(\gamma[s_1,s_2])=s_2-s_1\qquad\text{for $0\leq s_1\leq s_2\leq\ell$}.
\end{equation}
Here and henceforth we use the notation $\gamma[s_1,s_2]:=\{\gamma(s): s_1\leq s\leq s_2\}$.

A well-known example is given by the von Koch curve, for which the constants $c_\gamma$ and $C_\gamma$ are explicitly computed in \cite{Ponomarev2007}.

\begin{defin}\label{235}
Let $\gamma\colon[0,\ell]\to\R2$ be a continuous function satisfying \eqref{100} and \eqref{101}. 
We define $\cK(\gamma)$ as the collection of all sets $K$ of the form
\begin{equation}\label{111}
K=(\psi+R\gamma)[0,a],
\end{equation}
where $a\in[0,\ell]$, $R$ is an orthogonal matrix, and $\psi\colon[0,\ell]\to\R2$ is a Lipschitz function with Lipschitz constant $L:=\tfrac12c_\gamma\ell^{-1+1/\alpha}$ such that $\psi(0)=0$.
\end{defin}

\begin{remark}\label{242}
By the triangle inequality and the specific value of the Lipschitz constant $L$, the functions $\psi+R\gamma$ used in the definition of the class $\cK(\gamma)$ satisfy
\begin{equation}\label{255}
|(\psi+R\gamma)(s_1)-(\psi+R\gamma)(s_2)|\geq\frac{c_\gamma}2|s_1-s_2|^{1/\alpha}\quad \text{for all $s_1,s_2\in[0,\ell]$,}
\end{equation}
which, in particular, implies injectivity.
\end{remark}

\begin{proposition}\label{241}
Let $\gamma\colon[0,\ell]\to\R2$ be a function satisfying \eqref{100} and \eqref{101}, with $\gamma(0)=0$.
Then the collection $\cK(\gamma)$ is $\alpha$-admissible.
\end{proposition}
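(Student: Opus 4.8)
The goal is to verify properties (1)--(4) of Definition \ref{238} for $\cK(\gamma)$. The underlying idea is that every $K\in\cK(\gamma)$ is the image of a $1/\alpha$-H\"older curve $\psi+R\gamma$ that is bi-H\"older (by Remark \ref{242}), so $\cH^\alpha$ of an arc is controlled above and below by the length of the parameter interval; this lets us transfer everything to the finite-dimensional compact parameter space $[0,\ell]\times O(2)\times\mathrm{Lip}_L$. First I would set up this parametrization cleanly: a generic element is determined by the triple $(a,R,\psi)$ with $a\in[0,\ell]$, $R$ orthogonal, and $\psi$ $L$-Lipschitz with $\psi(0)=0$, and note that $\psi$ ranges in a set that is compact for uniform convergence by Ascoli--Arzel\`a (equibounded since $|\psi(s)|\le Ls$, equicontinuous by the Lipschitz bound), while $O(2)$ is compact and $[0,\ell]$ is compact.

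\textbf{Compactness (1).} Given a sequence $K_n=(\psi_n+R_n\gamma)[0,a_n]$, extract $a_n\to a$, $R_n\to R$, $\psi_n\to\psi$ uniformly; the limits satisfy the same constraints, so $K:=(\psi+R\gamma)[0,a]\in\cK(\gamma)$. The key step is that $(\psi_n+R_n\gamma)\to(\psi+R\gamma)$ uniformly on $[0,\ell]$ (triangle inequality plus $\sup_s|R_n\gamma(s)-R\gamma(s)|\le|R_n-R|\,\sup_s|\gamma(s)|$), and uniform convergence of injective parametrizations on a fixed compact interval, together with convergence of the endpoints $a_n\to a$, forces Hausdorff convergence of the images $K_n\to K$. (For the "$\subset$" half of Hausdorff convergence one uses that points of $K_n$ are $(\psi_n+R_n\gamma)(s_n)$ with $s_n\in[0,a_n]$, pass to a subsequence $s_n\to s\in[0,a]$; for the "$\supset$" half, any $(\psi+R\gamma)(s)$ with $s\le a$ is approximated by $(\psi_n+R_n\gamma)(s\wedge a_n)$.)

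\textbf{Lower semicontinuity (2) and connectedness (3).} For any $K=(\psi+R\gamma)[0,a]$ I claim $\cH^\alpha(K)=a$: the upper bound $\cH^\alpha(K)\le C_\gamma'^{\,\alpha}\cdot(\text{length of }[0,a])$-type estimate combined with $\eqref{101}$ is the wrong tool; instead, the lower H\"older bound $\eqref{255}$ shows that $(\psi+R\gamma)^{-1}$ is $\alpha$-H\"older on $K$, hence $\cH^\alpha((\psi+R\gamma)[0,a])\ge (c_\gamma/2)^{-\alpha}\,\cH^1([0,a]) $ is again not quite it either---so the clean route is: $\cH^\alpha(K)\le \cH^\alpha(\gamma[0,a])\cdot(\text{bi-Lipschitz-in-}\alpha\text{-sense factor})$, and more precisely use that $\psi+R\gamma$ and $\gamma$ differ by a Lipschitz perturbation, giving $\cH^\alpha(K)=\cH^\alpha(\gamma[0,a])=a$ by $\eqref{101}$ (a Lipschitz perturbation of a $1/\alpha$-H\"older, $\alpha>1$, curve does not change its $\cH^\alpha$ measure, since it changes the parametrization's increments only at order $|s_1-s_2|$, which is negligible against $|s_1-s_2|^{1/\alpha}$ for the $\alpha$-Hausdorff content). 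Granting $\cH^\alpha(K)=a$: if $K_n\to K$ with $K_n\leftrightarrow(a_n,R_n,\psi_n)$, then along the convergent subsequence $a_n\to a$, so $\cH^\alpha(K_n)=a_n\to a=\cH^\alpha(K)$; thus $\cH^\alpha$ is not merely lower semicontinuous but continuous on $\cK(\gamma)$. For (3), each $K$ is a continuous image of an interval, hence connected, so $m=1$ works.

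\textbf{Extension property (4).} This is the main obstacle. Given $H_n\to H\subset K$ with $H_n=(\psi_n+R_n\gamma)[0,b_n]$, $H=(\psi_H+R_H\gamma)[0,b]$, and $K=(\psi_K+R_K\gamma)[0,a]$ with $a\ge b$, I need $K_n\in\cK(\gamma)$ with $K_n\supset H_n$, $K_n\to K$, and $\limsup_n\cH^\alpha(K_n\setminus H_n)\le\cH^\alpha(K\setminus H)=a-b$. The construction: keep the orthogonal matrix $R_n$ and extend the parametrization of $H_n$ past $b_n$ by grafting on a Lipschitz copy of the "tail" of $K$'s parametrization. Concretely, since $H\subset K$ and both are arcs of bi-H\"older curves from $0$, one shows $H=(\psi_K+R_K\gamma)[0,b]$ with the \emph{same} $(\psi_K,R_K)$ up to the point $b$ (injectivity plus the shared basepoint $0$ pins down the parametrization, so $\psi_H+R_H\gamma=\psi_K+R_K\gamma$ on $[0,b]$); hence the tail $K\setminus H=(\psi_K+R_K\gamma)(b,a]$. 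Now define $K_n$ by following $\psi_n+R_n\gamma$ on $[0,b_n]$ and then appending a rigidly-transported, Lipschitz-reparametrized image of $(\psi_K+R_K\gamma)$ on $[b,a]$, translated so the two pieces join at $(\psi_n+R_n\gamma)(b_n)$, rescaled in parameter to land in $[b_n, b_n+(a-b)]\subset[0,\ell]$. One must check (i) the resulting map is still of the form $\psi+R\gamma$ with $\psi$ globally $L$-Lipschitz and $\psi(0)=0$ --- this is where the precise value $L=\tfrac12 c_\gamma\ell^{-1+1/\alpha}$ and the freedom in $R$ matter, and where one pays attention to the corner at the junction (the Lipschitz constant of $\psi$ near $b_n$ is the delicate point); (ii) $K_n\to K$, which follows because $H_n\to H$, the junction points converge, and the appended tails converge uniformly; (iii) $\cH^\alpha(K_n\setminus H_n)=(\text{appended parameter length})=a-b=\cH^\alpha(K\setminus H)$ by the measure formula from step (2), so in fact equality holds in $\eqref{220}$. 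I expect (i) --- arranging the Lipschitz extension across the junction while staying in the admissible class, possibly at the cost of slightly shrinking or reparametrizing --- to be the technical heart of the argument, and the construction may need an auxiliary lemma decomposing $\psi_n$ on $[0,b_n]$ and building the tail so the total Lipschitz bound $L$ is respected.
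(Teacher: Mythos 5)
Your treatment of (1)--(3) follows the paper's route, with one caveat: everything hinges on the identity $\cH^\alpha\bigl((\psi+R\gamma)[s_1,s_2]\bigr)=s_2-s_1$, which you assert via the heuristic ``a Lipschitz perturbation changes increments only at order $|s_1-s_2|$, negligible against $|s_1-s_2|^{1/\alpha}$.'' That is the right intuition but not a proof: the paper devotes a separate lemma (Lemma \ref{101a}) to it, transporting a $\delta$-cover of $\gamma[s_1,s_2]$ to a cover of $(\psi+\gamma)[s_1,s_2]$ by sets $A_i+\psi(S_i)$ and controlling $\sum(\delta_i+\sigma_i)^\alpha$ via a convexity estimate, then reversing the roles of $\gamma$ and $\psi+\gamma$ using \eqref{255} for the opposite inequality. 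You should supply this argument rather than the increment heuristic.

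The genuine gap is in (4). First, the construction you sketch --- grafting onto $H_n$ a \emph{rigidly transported, parameter-rescaled} copy of the tail $(\psi_K+R_K\gamma)(b,a]$, relocated to the parameter interval $[b_n,b_n+(a-b)]$ --- leaves the class $\cK(\gamma)$: an element must be of the form $(\psi+R\gamma)[0,a]$ with the \emph{original} parametrization of $\gamma$, and rewriting $R_K\gamma(s-b_n+b)$ as $\tilde\psi(s)+R_n\gamma(s)$ would require $R_K\gamma(s-b_n+b)-R_n\gamma(s)$ to be Lipschitz in $s$, which it is not (it is only $1/\alpha$-H\"older). The fix, which is what the paper does, is to perform no reparametrization at all: having shown (as you correctly do, via injectivity and the common basepoint) that $R_K=R_H$ and that $\psi_K$ agrees with $H$'s $\psi$ on $[0,b]$, one sets $\psi_n(s):=\phi_n(s)$ for $s\le a_n$ and $\psi_n(s):=\psi_K(s)+\phi_n(a_n)-\psi_K(a_n)$ for $s>a_n$, and takes $K_n:=(\psi_n+R_n\gamma)[0,\max\{a_n,b\}]$. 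This also dissolves your worry about the ``corner at the junction'': only $\psi_n$, not the composite curve, needs to be $L$-Lipschitz, and a continuous function that is $L$-Lipschitz on each of two adjoining intervals is $L$-Lipschitz on their union, so no auxiliary lemma is needed. Finally, you tacitly assume that the parameters $(R_n,a_n,\phi_n)$ of $H_n$ converge; this requires proving that the representation \eqref{111} of $H$ is unique (so that every convergent subsequence has the same limit), which is exactly what the $S=R$, $\phi=\psi$ argument delivers and should be stated explicitly.
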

To prove Proposition \ref{241} the following lemma is required.
It shows that adding a Lipschitz function to $\gamma$ does not change the Hausdorff measure of the image, thus preserving condition \eqref{101}.
\begin{lemma}\label{101a}
Let $\gamma$, $\psi$, and $R$ be as in Definition \ref{235}.
Then
\begin{equation}\label{102}
\cH^\alpha((\psi+R\gamma)[s_1,s_2])= s_2-s_1
\end{equation}
for every $0\leq s_1\leq s_2\leq \ell$.
\end{lemma}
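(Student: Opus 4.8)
\emph{Plan.} Write $\Phi:=\psi+R\gamma$ and fix $s_1<s_2$ in $[0,\ell]$ (the case $s_1=s_2$ being trivial, since singletons have zero $\cH^\alpha$-measure). By the left inequality in \eqref{100} the restriction $\gamma|_{[s_1,s_2]}$ is injective, and by \eqref{255} so is $\Phi|_{[s_1,s_2]}$; hence
\[
g:=\Phi\circ(\gamma|_{[s_1,s_2]})^{-1}\colon \gamma[s_1,s_2]\to\Phi[s_1,s_2]
\]
is a bijection between compact subsets of $\R2$, characterized by $g(\gamma(t))=\Phi(t)$. Since $\cH^\alpha(\gamma[s_1,s_2])=s_2-s_1$ by \eqref{101}, it suffices to prove $\cH^\alpha(\Phi[s_1,s_2])=\cH^\alpha(\gamma[s_1,s_2])$, and I would do this by showing that \emph{both} $g$ and $g^{-1}$ are, at small scales, arbitrarily close to $1$-Lipschitz.

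The elementary tool is the following: if $h\colon A\to B$ is a surjection between subsets of $\R2$ and $\omega(r):=\sup\{|h(x)-h(y)|: x,y\in A,\ |x-y|\le r\}$ satisfies $\limsup_{r\to0^+}\omega(r)/r\le 1$, then $\cH^\alpha(B)\le\cH^\alpha(A)$. Indeed, given $\eta>0$ pick $\rho>0$ with $\omega(r)\le(1+\eta)r$ for $r\le\rho$; for any cover $\{U_i\}$ of $A$ with $\operatorname{diam}U_i\le\delta\le\rho$, the sets $h(U_i\cap A)$ cover $B$ with $\operatorname{diam}h(U_i\cap A)\le(1+\eta)\operatorname{diam}U_i$, whence for the size-$\delta$ pre-measures $\cH^\alpha_{(1+\eta)\delta}(B)\le(1+\eta)^\alpha\cH^\alpha_\delta(A)$; letting $\delta\to0$ and then $\eta\to0$ gives the claim. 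Applying this to $h=g$ and to $h=g^{-1}$ yields $\cH^\alpha(\Phi[s_1,s_2])=\cH^\alpha(\gamma[s_1,s_2])=s_2-s_1$, which is \eqref{102}.

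It remains to verify the modulus hypothesis for $g$ and $g^{-1}$. For $t,t'\in[s_1,s_2]$, the Lipschitz bound on $\psi$ and the left inequality in \eqref{100} give
\[
|\Phi(t)-\Phi(t')|\le|\gamma(t)-\gamma(t')|+L|t-t'|\le|\gamma(t)-\gamma(t')|+Lc_\gamma^{-\alpha}\,|\gamma(t)-\gamma(t')|^{\alpha},
\]
so, with $r=|\gamma(t)-\gamma(t')|$, one gets $|\Phi(t)-\Phi(t')|/r\le 1+Lc_\gamma^{-\alpha}r^{\alpha-1}\to 1$ as $r\to0$ (here $\alpha>1$ is essential), which bounds the modulus of $g$. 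Symmetrically, since $\gamma(t)-\gamma(t')=R^{-1}\big((\Phi(t)-\Phi(t'))-(\psi(t)-\psi(t'))\big)$ and $R$ is orthogonal, the Lipschitz bound on $\psi$ together with \eqref{255} gives
\[
|\gamma(t)-\gamma(t')|\le|\Phi(t)-\Phi(t')|+L|t-t'|\le|\Phi(t)-\Phi(t')|+L(2/c_\gamma)^{\alpha}\,|\Phi(t)-\Phi(t')|^{\alpha},
\]
and the same estimate follows for $g^{-1}$ with $r=|\Phi(t)-\Phi(t')|$.

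The point I expect to be the crux — and the reason a direct estimate fails — is exactly this last step. The map $\Phi$ is itself only $1/\alpha$-H\"older, with a constant of order $C_\gamma$, so covering $\Phi[s_1,s_2]$ by images of short parameter intervals loses a factor $C_\gamma^\alpha$ and never produces the sharp constant $1$. What rescues it is that the \emph{reparametrization} $g$ between the two images is almost an isometry at small scales: the perturbation $\psi(t)-\psi(t')$ is only $O(|t-t'|)$, which by \eqref{100} (equivalently \eqref{255}) is $o(|\gamma(t)-\gamma(t')|)$ as $|t-t'|\to0$, since $|\gamma(t)-\gamma(t')|\gtrsim|t-t'|^{1/\alpha}\gg|t-t'|$. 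Thus the whole argument hinges on $\alpha>1$ and on the two-sided control of the moduli in \eqref{100} and \eqref{255}, with \eqref{101} entering only to identify the common value of the two measures as $s_2-s_1$.
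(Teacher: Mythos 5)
Your proof is correct, and it reaches the conclusion by a route that is organized differently from the paper's, although both rest on the same quantitative fact. The paper proves the inequality $\cH^\alpha((\psi+\gamma)[s_1,s_2])\leq\cH^\alpha(\gamma[s_1,s_2])$ by an explicit covering construction: it takes a near-optimal cover $A_i$ of $\gamma[s_1,s_2]$, replaces each $A_i$ by the Minkowski sum $A_i+\psi(S_i)$ with $S_i$ the corresponding parameter set, observes that $\diam\psi(S_i)\leq L\,(\diam A_i)^\alpha/c_\gamma^\alpha$, and controls $\sum(\diam A_i+\diam\psi(S_i))^\alpha$ via a convexity (Young-type) inequality with an auxiliary parameter $\eta$; the reverse inequality is then obtained by running the same argument with $\psi+\gamma$ in place of $\gamma$ and $-\psi$ in place of $\psi$, using \eqref{255}. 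You instead isolate a clean general lemma — a surjection whose modulus of continuity satisfies $\limsup_{r\to0}\omega(r)/r\leq1$ does not increase $\cH^\alpha$ — and verify it for the reparametrization $g$ between the two images and for $g^{-1}$, using \eqref{100} in one direction and \eqref{255} in the other. The key estimate is identical in substance (the Lipschitz perturbation distorts a set of diameter $r$ by $O(r^\alpha)=o(r)$, which is where $\alpha>1$ enters), but your packaging is more modular: it replaces the convexity computation by the elementary observation that an asymptotically $1$-Lipschitz map multiplies the premeasure by at most $(1+\eta)^\alpha$, and it makes transparent why both \eqref{100} and \eqref{255} are needed. Your closing remark about why a direct parametrized covering of $\Phi[s_1,s_2]$ would lose a factor $C_\gamma^\alpha$ is also accurate and is precisely the reason both proofs compare the two images rather than estimating each from its parametrization.
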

\begin{proof}
Since $\cH^\alpha(\gamma[s_1,s_2])=\cH^\alpha(R\gamma[s_1,s_2])$ and $R\gamma$ still satisfies \eqref{100} and \eqref{101}, it is not restrictive to assume $R=I$.

Let us fix $0\leq s_1\leq s_2\leq \ell$. We will start by proving that
\begin{equation}\label{102a}
\cH^\alpha((\psi+\gamma)[s_1,s_2])\leq \cH^\alpha(\gamma[s_1,s_2])
\end{equation}
Let $\eps>0$, $\delta>0$.
By definition of the Hausdorff measure, there exists a sequence $A_i$ of subsets of $\R2$ such that $\gamma[s_1,s_2]\subseteq\cup_{i=1}^\infty A_i$, $\delta_i:=\diam A_i<\delta$, and $\sum_{i=1}^\infty \omega_\alpha\delta_i^\alpha < \cH^\alpha(\gamma[s_1,s_2])+\eps$, where $\omega_\alpha$ is the usual constant for the $\alpha$-dimensional Hausdorff measure.

Define the set $S_i:=\{s\in[0,\ell]:\gamma(s)\in A_i\}$.
By \eqref{100} it is easy to see that $\diam S_i\leq\delta_i^\alpha/c_\gamma^\alpha$, hence $\sigma_i:=\diam \psi(S_i)\leq L\delta_i^\alpha/c_\gamma^\alpha$, which implies that $\sum_{i=1}^\infty \omega_\alpha\sigma_i< L(\cH^\alpha(\gamma[s_1,s_2])\allowbreak+\eps)/c_\gamma^\alpha$, where $L$ is as in Definition \ref{235}.

Consider now $\hat A_i:=A_i+\psi(S_i)$.
It is immediate to show that $\hat A_i$ is a covering for $(\psi+\gamma)[s_1,s_2]$, and that $\diam\hat A_i<\hat\delta$, where $\hat\delta\to0$ as $\delta\to0$.
Indeed, $\diam\hat A_i\leq\delta_i+\sigma_i\leq \delta_i+L\delta_i^\alpha/c_\gamma^\alpha< \delta+L\delta^\alpha/c_\gamma^\alpha=:\hat\delta$.

Using the convexity of the function $t\mapsto t^\alpha$ and the estimates for $\sigma_i$ and $\sum_{i=1}^\infty \sigma_i$, for $0<\eta<1$ we get
\begin{equation*}
\begin{split}
\sum_{i\in I} \omega_\alpha(\diam\hat A_i)^\alpha \leq & \sum_{i\in I} \omega_\alpha(\delta_i+\sigma_i)^\alpha\leq \frac{\omega_\alpha}{(1-\eta)^{\alpha-1}}\sum_{i\in I} \delta_i^\alpha +\frac{\omega_\alpha}{\eta^{\alpha-1}} \sum_{i\in I} \sigma_i^{\alpha-1}\sigma_i \\
\leq & (\cH^\alpha(\gamma[s_1,s_2])+\eps)\bigg(\frac1{(1-\eta)^{\alpha-1}}+\frac{\delta^{\alpha(\alpha-1)}L^\alpha}{\eta^{\alpha-1}c_\gamma^{\alpha^2}}\bigg).
\end{split}
\end{equation*}
By sending first $\delta\to0$ and then $\eps,\eta\to0$ and using the definition of Hausdorff measure, we obtain \eqref{102a}.

To prove the opposite inequality, we note that the function $\psi+\gamma$ satisfies \eqref{255}, which has the same structure as \eqref{100} with $c_\gamma$ replaced by $c_\gamma/2$.
Hence, we can apply the previous argument with $\gamma$ replaced by $\psi+\gamma$ and $\psi$ replaced by $-\psi$ and we obtain $\cH^\alpha(\gamma[s_1,s_2])\leq\cH^\alpha((\psi+\gamma)[s_1,s_2])$.
Together with \eqref{102a} this gives $\cH^\alpha(\gamma[s_1,s_2])=\cH^\alpha((\psi+\gamma)[s_1,s_2])$.
The conclusion follows from \eqref{101}.
\end{proof}

\begin{proof}[Proof of Proposition \ref{241}]
We have to show that $\cK(\gamma)$ satisfies properties (1)-(4) in Definition \ref{238}.
To prove the compactness property (1), let us fix a sequence $K_n$ in $\cK(\gamma)$.
By \eqref{111} there exist a sequence $\psi_n$ of Lipschitz functions with constant $L$, a sequence $R_n$ of orthogonal matrices, and a sequence $a_n$ in $[0,\ell]$ such that $K_n=(\psi_n+R_n\gamma)[0,a_n]$.
By Arzel\`a-Ascoli Theorem, passing to a subsequence, we may assume that $\psi_n\to\psi$ uniformly, $R_n\to R$, and $a_n\to a$, where $\psi$ is a Lipschitz function with constant $L$, $R$ is an orthogonal matrix, and $a\in[0,\ell]$.
Thus $K_n\to K:=(\psi+R\gamma)[0,a]\in\cK(\gamma)$ in the Hausdorff metric.

Let us prove the lower semicontinuity property (2).
Let $K_n\to K$ in the Hausdorff metric with $K_n,K\in\cK(\gamma)$.
We have to prove that
\begin{equation}\label{252}
\cH^\alpha(K)\leq\lim_{n\to\infty} \cH^\alpha(K_n),
\end{equation}
assuming that the limit exists.
By \eqref{111} we have $K_n=(\psi_n+R_n\gamma)[0,a_n]$ with $\psi_n$, $R_n$, $a_n$ as in the previous step.
As in the proof of property (1) we have that $K=(\psi+R\gamma)[0,a]$, where $\psi$ is a suitable Lipschitz function with constant $L$, $R$ is a suitable orthogonal matrix, and $a$ is the limit of a suitable subsequence of $a_n$.
By Lemma \ref{101a} we have $\cH^\alpha(K_n)=a_n$ and $\cH^\alpha(K)=a$.
This concludes the proof of \eqref{252}.

Property (3) about connectedness, with $m=1$, follows immediately from the continuity of $\gamma$ and $\psi$.

To prove the extension property (4), we fix $H_n,H,K\in\cK(\gamma)$ such that $H_n\to H$ in the Hausdorff metric and $H\subset K$.
By \eqref{111} there exist $\phi_n,\phi,\psi$ Lipschitz functions with constant $L$, $R_n,R,S$ orthogonal matrices, and $a_n,a,b\in[0,\ell]$ such that $\phi_n(0)=\phi(0)=\psi(0)=0$, $H_n=(\phi_n+R_n\gamma)[0,a_n]$, $H=(\phi+R\gamma)[0,a]$, and $K=(\psi+S\gamma)[0,b]$.
From Lemma \ref{101a}, it follows that $\cH^\alpha(H)=a$ and $\cH^\alpha(K)=b$.
Since $H\subset K$, we conclude that $a\leq b$.

Let us prove that $S=R$. 
Notice that $(\phi+R\gamma)[0,s]\subset (\phi+R\gamma)[0,a]=H \subset K$ for every $s\in[0,a]$.
Now, the set $(\phi+R\gamma)[0,s]$ is connected (since it is the continuous image of a connected set), contains $0$, and satisfies $\cH^\alpha((\phi+R\gamma)[0,s])=s$ by Lemma \ref{101a}.
By Lemma \ref{117} below, it follows that $(\phi+R\gamma)[0,s]=(\psi+S\gamma)[0,s]$ for all $s\in[0,a]$. 
Let us prove now that this implies 
\begin{equation}\label{253}
(\phi+R\gamma)(s)=(\psi+S\gamma)(s)\qquad \text{for all $s\in[0,a]$.}
\end{equation}
Suppose, by contradiction, that there esists $s_0\in[0,a]$ such that $\phi(s_0)+R\gamma(s_0)\neq\psi(s_0)+S\gamma(s_0)$.
Since $\phi(s_0)+R\gamma(s_0)\in (\phi+R\gamma)[0,s_0]= (\psi+S\gamma)[0,s_0]$, there exists $s_1\in[0,s_0)$ such that $\phi(s_0)+R\gamma(s_0)=\psi(s_1)+S\gamma(s_1)$.
On the other hand, since $\psi(s_1)+S\gamma(s_1)\in (\psi+S\gamma)[0,s_1]=(\phi+R\gamma)[0,s_1]$, there exists $s_2\in[0,s_1]$ such that $\phi(s_2)+R\gamma(s_2)=\psi(s_1)+S\gamma(s_1)=\phi(s_0)+R\gamma(s_0)$.
Since $s_2\leq s_1<s_0$, this violates the injectivity condition \eqref{255}. 
This contradiction proves \eqref{253}.

By applying $S^{-1}$ to both sides of \eqref{253}, we deduce that for every $s\in[0,a]$ we have $S^{-1}\phi(s)+S^{-1}R\gamma(s) = S^{-1}\psi(s)+\gamma(s)$, which implies 
\begin{equation*}
S^{-1}(\phi(s)-\psi(s))=(I-S^{-1}R)\gamma(s).
\end{equation*}
Since $R$ and $S$ are orthogonal matrices, if $I-S^{-1}R\neq0$, then it is an invertible matrix and we deduce $\gamma(s)=(I-S^{-1}R)^{-1}S^{-1}(\phi(s)-\psi(s))$ for every $s\in[0,a]$. 
This is impossible, since the right-hand side is Lipschitz continuous, while $\gamma$ cannot be Lipschitz continuous in $[0,a]$ because of \eqref{100}.
Therefore, it must be $S=R$.
It follows immediately from \eqref{253} that $\phi=\psi$ on $[0,a]$.
The same argument shows also that $R$ and $a$ are uniquely determined by $H$, and so is $\phi$ on $[0,a]$.

By Arzel\`a-Ascoli Theorem, up to a subsequence, we have that $\phi_n\to\hat\phi$ uniformly, $R_n\to\hat R$, and $a_n\to\hat a$, where $\hat\phi$ is a Lipschitz function with $\hat\phi(0)=0$ and Lipschitz constant $L$, $\hat R$ is an orthogonal matrix, and $\hat a\in[0,1]$. 
Since $H_n\to H$ in the Hausdorff metric, we have that $H=(\hat\phi+\hat R\gamma)[0,\hat a]$.
By uniqueness, $\hat R=R$, $\hat a=a$, and $\hat\phi=\phi$ on $[0,a]$.
As the limit does not depend on the subsequence, we conclude that the full sequence $(R_n,a_n)$ converges to $(R,a)$ and $\phi_n|_{[0,a_n]}\to\phi|_{[0,a]}$ uniformly.

We now define
\begin{equation*}
\psi_n(s):=\begin{cases}
\phi_n(s) & 0\leq s\leq a_n, \\
\psi(s)+\phi_n(a_n)-\psi(a_n) & a_n<s\leq\ell,
\end{cases}
\end{equation*}
and $K_n:=(\psi_n+R_n\gamma)[0,b_n]$, where $b_n:=\max\{a_n, b\}$.
Since $\psi_n$ are Lipschitz with constant $L$, the sets $K_n$ belong to $\cK(\gamma)$.
Since $\psi=\phi$ on $[0,a]$ and $\phi_n|_{[0,a_n]}\to\phi|_{[0,a]}$ uniformly, we have that $\psi_n\to\psi$ uniformly on $[0,\ell]$.
Since, in addition, $R_n\to R$ and $b_n\to b$ and $K=(\psi+R\gamma)[0,b]$, we conclude that $K_n\to K$ in the Hausdorff metric.
By construction, we have that $K_n\supset H_n$ and $K_n\setminus H_n=(\psi_n+R_n\gamma)(a_n,b_n]$, hence by \eqref{102} $\cH^\alpha(K_n\setminus H_n)=b_n-a_n$.
Since $K\setminus H=(\psi+R\gamma)(a,b]$, by \eqref{102} we have $\cH^\alpha(K\setminus H)=b-a$, thus inequality \eqref{220} is satisfied.
This concludes the proof.
\end{proof}

\begin{lemma}\label{117}
Let $\gamma$ and $\psi$ be as in Definition \ref{235} and let $S$ be an orthogonal matrix.
Let $0<\sigma<\ell$ and let $H_\sigma$ be a compact connected set such that $0\in H_\sigma$, $H_\sigma\subset(\psi+S\gamma)[0,\ell]$, and $\cH^\alpha(H_\sigma)=\sigma$. 
Then $H_\sigma=(\psi+S\gamma)[0,\sigma]$.
\end{lemma}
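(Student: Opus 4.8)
\textbf{Proof plan for Lemma \ref{117}.}
The curve $\eta:=\psi+S\gamma$ is continuous and injective on $[0,\ell]$ by Remark \ref{242}, so $\eta$ is a homeomorphism onto its image $\Gamma:=\eta[0,\ell]$, and the preimage $T:=\eta^{-1}(H_\sigma)\subset[0,\ell]$ is compact, connected, and contains $0$. A connected compact subset of $[0,\ell]$ containing $0$ is a closed interval $[0,\tau]$ for some $\tau\in[0,\ell]$, hence $H_\sigma=\eta[0,\tau]$. It then remains only to identify $\tau$: by Lemma \ref{101a} (equation \eqref{102}) we have $\cH^\alpha(\eta[0,\tau])=\tau-0=\tau$, and by hypothesis $\cH^\alpha(H_\sigma)=\sigma$, so $\tau=\sigma$ and $H_\sigma=\eta[0,\sigma]=(\psi+S\gamma)[0,\sigma]$, as claimed.

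The one point that deserves care — and is the main obstacle — is the step asserting that $T=\eta^{-1}(H_\sigma)$ is connected. This is not automatic from continuity of $\eta$ alone (continuous preimages of connected sets need not be connected); it uses that $\eta$ is a \emph{homeomorphism} onto $\Gamma$. Since $[0,\ell]$ is compact and $\eta$ is a continuous injection into the Hausdorff space $\R2$, the map $\eta\colon[0,\ell]\to\Gamma$ is a closed bijection, hence a homeomorphism; in particular $\eta^{-1}\colon\Gamma\to[0,\ell]$ is continuous, so $T=\eta^{-1}(H_\sigma)$ is the continuous image of the connected set $H_\sigma$ and is therefore connected. Compactness of $T$ is clear since $H_\sigma$ is compact and $\eta^{-1}$ is continuous (or simply since $T$ is closed in the compact set $[0,\ell]$), and $0\in T$ because $\eta(0)=\psi(0)+S\gamma(0)=0\in H_\sigma$.

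Once connectedness of $T$ is in hand, the classification of connected compact subsets of $\R$ containing a given point as closed intervals with that point as an endpoint (here $T=[0,\tau]$) is elementary, and the measure computation via \eqref{102} closes the argument. Thus the proof reduces to the topological observation that a continuous injection from a compact interval is a homeomorphism onto its image, followed by the Hausdorff-measure normalization from Lemma \ref{101a}.
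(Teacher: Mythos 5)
Your proof is correct and follows essentially the same route as the paper's: both arguments observe that $\psi+S\gamma$ is a continuous injection from the compact interval $[0,\ell]$, hence a homeomorphism onto its image, deduce that the preimage of $H_\sigma$ is a compact connected subset of $[0,\ell]$ containing $0$ and therefore of the form $[0,\tau]$, and finally identify $\tau=\sigma$ via \eqref{102}. Your extra care in justifying the connectedness of the preimage (via continuity of the inverse) is a point the paper leaves implicit, but the substance is identical.
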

\begin{proof}
The result easily follows from known properties of the topology of the line.
Indeed, the map $(\psi+S\gamma):[0,\ell]\to (\psi+S\gamma)[0,\ell]$ is continuous and injective by Remark \ref{242}, and therefore it is a homeomorphism.
Thus, since $(\psi+S\gamma)^{-1}(H_\sigma)$ is compact, connected, contains the origin, and is contained in $[0,\ell]$, it must have the form $[0,\tau]$ for a certain $\tau$.
Therefore, $(\psi+S\gamma)[0,\tau]=H_\sigma$.
By \eqref{102}, we have $\tau=\cH^\alpha((\psi+S\gamma)[0,\tau])=\cH^\alpha(H_\sigma)=\sigma$, which concludes the proof.
\end{proof}

\section{Convergence of minimizers}\label{270}
In this section we prove the following result on the strong convergence of the solutions to minimum problems in $\Omega\setminus K_n$.
\begin{theorem}\label{DMT5.1}
Let $K_n$ be a sequence of compact subsets of $\cl\Omega$ satysfying condition (3) 
of Definition \ref{238}.
Assume that $K_n$ converges to $K\subset\cl\Omega$ in the Hausdorff metric and that $\cL^2(K)=0$, where $\cL^2$ is the two-dimensional Lebesbue measure. 
Let $f$ satisfy \eqref{233}, let $w_n\to w$ strongly in $W^{1,p}(\Omega)$, and let $u_n$ and $u$ be the solutions to the minimum problems 
\begin{equation*}
\min\{\cE_{f}(v,K_n):v\in\cA(w_n,K_n)\} \qquad\text{and}\qquad\min\{\cE_{f}(v,K):v\in\cA(w,K)\}.
\end{equation*}
Then $\nabla u_n1_{\Omega\setminus K_n}\to\nabla u1_{\Omega\setminus K}$ strongly in $L^p(\Omega;\R2)$.
\end{theorem}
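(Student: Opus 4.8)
We outline the proof; it follows the scheme of \cite{DalMasoToader2002}, adapted to the non\-/quadratic energy $\cE_f$. The plan is: (i) extract a weakly convergent subsequence of the extended gradients $\nabla u_n1_{\Omega\setminus K_n}$ and identify its limit as the gradient of an admissible function for the limit problem; (ii) show, via a recovery sequence and lower semicontinuity, that this limit is actually a minimizer, hence coincides (in gradient) with $u$; (iii) upgrade the convergence from weak to strong using the strict convexity of $f$. \emph{Compactness and reductions.} By minimality of $u_n$ and \eqref{233c}, $\cE_f(u_n,K_n)\le\cE_f(w_n,K_n)\le C\|\nabla w_n\|_{L^p}^p+\|a\|_{L^1}$, which is bounded because $w_n\to w$ in $W^{1,p}(\Omega)$; the coercivity in \eqref{233c} then makes $\nabla u_n1_{\Omega\setminus K_n}$ bounded in $L^p(\Omega;\R2)$, so along a subsequence $\nabla u_n1_{\Omega\setminus K_n}\rightharpoonup g$ weakly in $L^p(\Omega;\R2)$. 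Since $K_n\to K$ in the Hausdorff metric, $K_n$ is eventually contained in every closed neighbourhood of $K$; as $\cL^2(K)=0$ this gives $\cL^2(K_n)\to0$, and every compact subset of $\Omega\setminus K$ is contained in $\Omega\setminus K_n$ for $n$ large. Hence $g=0$ a.e.\ on $K$, and $\nabla u_n\rightharpoonup g$ weakly in $L^p_\loc(\Omega\setminus K;\R2)$.

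\emph{Identifying the weak limit (the delicate point).} Following \cite[proof of Theorem 5.1]{DalMasoToader2002}, and crucially using the uniform bound (3) of Definition \ref{238} on the number of connected components, one shows that $g=\nabla u_\infty1_{\Omega\setminus K}$ for some $u_\infty\in\cA(w,K)$. The idea is a chaining argument based on the Poincar\'e inequality on balls compactly contained in the connected components of $\Omega\setminus K$, which produces (after subtracting suitable constants) an $L^p_\loc$ limit of $u_n$ with gradient $g$, combined with the fact that a neighbourhood of any point of $\partial_D\Omega\setminus K$ is eventually disjoint from $K_n$, so that the traces $u_n=w_n$ on that portion of the boundary (see the discussion after Definition \ref{222}) pass to the limit, which fixes the constants and the boundary datum. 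This is the main obstacle: without (3) the sets $\Omega\setminus K_n$ may ``fragment'' in the limit and interior values can no longer be transported to the Dirichlet boundary; it is exactly the mechanism behind the approximation result of Lemma \ref{nd100}. Everything after this step is soft analysis using only convexity, $C^1$-regularity, and the growth conditions \eqref{233}.

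\emph{Minimality and weak convergence of the whole sequence.} Applying Lemma \ref{nd100} (whose proof also hinges on (3)) to the minimizer $u$ of the limit problem, we get $v_n\in\cA(w_n,K_n)$ with $\nabla v_n1_{\Omega\setminus K_n}\to\nabla u1_{\Omega\setminus K}$ strongly in $L^p$; by \eqref{233c}, continuity of $f(x,\cdot)$, and a Vitali argument (the $f(\cdot,\nabla v_n)$ are dominated by the equi\-/integrable family $C|\nabla v_n|^p+a$) we obtain $\cE_f(v_n,K_n)\to\cE_f(u,K)$, hence $\limsup_n\cE_f(u_n,K_n)\le\cE_f(u,K)$ by minimality of $u_n$. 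Conversely, since $f\ge0$ and $0\le f(x,0)\le a(x)$,
\begin{equation*}
\int_{\Omega\setminus K}f(x,\nabla u_n1_{\Omega\setminus K_n})\,\de x\le\cE_f(u_n,K_n)+\int_{K_n}a(x)\,\de x,
\end{equation*}
with $\int_{K_n}a\to0$ since $\cL^2(K_n)\to0$; weak lower semicontinuity of $v\mapsto\int_{\Omega\setminus K}f(x,v)\,\de x$ (convexity, \eqref{233b}) then gives $\cE_f(u_\infty,K)=\int_{\Omega\setminus K}f(x,g)\,\de x\le\liminf_n\cE_f(u_n,K_n)$. As $u_\infty\in\cA(w,K)$ and $u$ is a minimizer, all these quantities coincide: $\cE_f(u_\infty,K)=\cE_f(u,K)=\lim_n\cE_f(u_n,K_n)$. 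By Remark \ref{2321} (strict convexity), $\nabla u_\infty=\nabla u$ a.e.\ on $\Omega\setminus K$, so $g=\nabla u1_{\Omega\setminus K}$; being independent of the subsequence, this forces $\nabla u_n1_{\Omega\setminus K_n}\rightharpoonup\nabla u1_{\Omega\setminus K}$ weakly in $L^p$ for the whole sequence, together with $\int_{\Omega\setminus K}f(x,\nabla u_n1_{\Omega\setminus K_n})\,\de x\to\int_{\Omega\setminus K}f(x,\nabla u1_{\Omega\setminus K})\,\de x$.

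\emph{Strong convergence.} Put $\xi_n:=\nabla u_n1_{\Omega\setminus K_n}$ and $\xi:=\nabla u1_{\Omega\setminus K}$. By convexity and \eqref{233b}, the functions $r_n:=f(\cdot,\xi_n)-f(\cdot,\xi)-\partial_\xi f(\cdot,\xi)\cdot(\xi_n-\xi)$ are nonnegative on $\Omega\setminus K$. Since $\partial_\xi f(\cdot,\xi)\in L^q(\Omega\setminus K;\R2)$ by \eqref{233d} and $\xi_n-\xi\rightharpoonup0$ in $L^p$, we have $\int_{\Omega\setminus K}\partial_\xi f(x,\xi)\cdot(\xi_n-\xi)\,\de x\to0$, so $\int_{\Omega\setminus K}r_n\,\de x\to0$ by the previous step, and along a subsequence $r_n\to0$ a.e. At a.e.\ $x$, if $|\xi_n(x)|\to\infty$ then $r_n(x)$ grows like $c|\xi_n(x)|^p$ (the subtracted terms being at most affine in $\xi_n(x)$, by \eqref{233c}--\eqref{233d}), contradicting $r_n(x)\to0$; hence $\{\xi_n(x)\}$ is bounded, and every limit point $\zeta$ satisfies $f(x,\zeta)-f(x,\xi(x))-\partial_\xi f(x,\xi(x))\cdot(\zeta-\xi(x))=0$, so $\zeta=\xi(x)$ by strict convexity \eqref{233b}; thus $\xi_n\to\xi$ a.e. Finally $f(\cdot,\xi_n)+a\ge0$ converges a.e.\ to $f(\cdot,\xi)+a$ with convergence of the integrals, hence converges in $L^1$ and is equi\-/integrable; since $c|\xi_n|^p\le f(\cdot,\xi_n)+a$, the family $\{|\xi_n|^p\}$ is equi\-/integrable, and Vitali's theorem gives $\xi_n\to\xi$ in $L^p(\Omega\setminus K;\R2)$, i.e., since $\cL^2(K)=0$, $\nabla u_n1_{\Omega\setminus K_n}\to\nabla u1_{\Omega\setminus K}$ strongly in $L^p(\Omega;\R2)$. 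As the argument applies to every subsequence, the full sequence converges.
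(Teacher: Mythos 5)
Your proof follows the same scheme as the paper's: compactness of the extended gradients, identification of the weak limit as $\nabla u^*1_{\Omega\setminus K}$ for some $u^*\in\cA(w,K)$ (the paper delegates this to \cite[Lemma 4.1]{DalMasoToader2002}, exactly the chaining/Poincar\'e argument you sketch, and this is indeed where condition (3) enters), a recovery sequence built with Lemma \ref{nd100} to get the $\limsup$ inequality, and an upgrade to strong convergence via strict convexity. Two remarks on where you deviate. First, in the minimality step you take a shortcut: instead of proving $\cE_f(u^*,K)\leq\cE_f(v,K)$ for \emph{every} competitor $v$, you only build a recovery sequence for the minimizer $u$ itself and close the chain of inequalities $\cE_f(u^*,K)\leq\liminf_n\cE_f(u_n,K_n)\leq\limsup_n\cE_f(u_n,K_n)\leq\cE_f(u,K)\leq\cE_f(u^*,K)$; this is legitimate and slightly leaner than the paper's argument. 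Second, where the paper invokes \cite[Theorem 3]{Visintin} to pass from convergence of the energies to convergence in measure, you prove the needed special case by hand (nonnegative convexity gap $r_n$, $\int r_n\to0$, pointwise boundedness from the $p$-growth, strict convexity, Vitali); this is correct and makes the proof self-contained.

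There is one technical point you must repair: Lemma \ref{nd100} is stated for $v\in W^{1,p}(\Omega\setminus K)$, whereas the minimizer $u$ only belongs to $L^{1,p}(\Omega\setminus K)$, and on an irregular open set (or on components of $\Omega\setminus K$ not meeting $\partial_D\Omega$) such a function need not lie in $L^p(\Omega\setminus K)$ globally. You therefore cannot apply Lemma \ref{nd100} directly to $u$. The fix is the truncation used in the paper: set $u^k:=(u\wedge(w+k))\vee(w-k)$, which satisfies $|u^k-w|\leq k$, hence $u^k\in W^{1,p}(\Omega\setminus K)$ with $u^k=w$ on $\partial_D\Omega\setminus K$ and $\nabla u^k\to\nabla u$ strongly in $L^p(\Omega\setminus K;\R2)$ as $k\to\infty$; apply Lemma \ref{nd100} to each $u^k$, pass to the limit first in $n$ and then in $k$. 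With this diagonal step inserted, your argument is complete.
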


To prove Theorem \ref{DMT5.1} we need the following lemma, which is related to the notion of Mosco convergence (see \cite{Mosco1969}).
\begin{lemma}\label{nd100}
Let $K_n$, $K$, $w_n$, and $w$ be as in Theorem \ref{DMT5.1} and let $v\in W^{1,p}(\Omega\setminus K)$ with $v=w$ on $\partial_D\Omega\setminus K$.
Then there exists a sequence $v_n\in W^{1,p}(\Omega\setminus K_n)$, with $v_n=w_n$ on $\partial_D\Omega\setminus K_n$, such that
\begin{subequations}\label{nd103}
\begin{eqnarray}
v_n1_{\Omega\setminus K_n} & \to & v1_{\Omega\setminus K}\qquad \text{strongly in $L^p(\Omega)$,} \\ 
\nabla v_n1_{\Omega\setminus K_n} & \to & \nabla v1_{\Omega\setminus K}\qquad \text{strongly in $L^p(\Omega;\R2)$.} \label{nd102}
\end{eqnarray}
\end{subequations}
\end{lemma}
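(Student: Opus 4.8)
The plan is to build the recovery sequence $v_n$ by first approximating $v$ by a function defined on a \emph{neighbourhood} of $\Omega\setminus K$, and then restricting that function to $\Omega\setminus K_n$. More precisely, I would proceed as follows. Write $v = w + z$ on $\Omega\setminus K$, where $z := v - w \in W^{1,p}(\Omega\setminus K)$ has zero trace on $\partial_D\Omega\setminus K$. Since $w_n\to w$ strongly in $W^{1,p}(\Omega)$, it suffices to produce a sequence $z_n\in W^{1,p}(\Omega\setminus K_n)$ with zero trace on $\partial_D\Omega\setminus K_n$ such that $z_n1_{\Omega\setminus K_n}\to z1_{\Omega\setminus K}$ strongly in $L^p$ together with the gradients, and then set $v_n := w_n + z_n$. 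The first reduction is therefore to the homogeneous boundary datum $w=0$.

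The second, and main, reduction is a density argument: using condition (3) (uniform bound $m$ on the number of connected components of the $K_n$, hence of $K$), together with $\cL^2(K)=0$ and the regularity of $\partial\Omega$, one knows from the scalar analogue of the results in \cite{DalMasoToader2002} that functions in $W^{1,p}(\Omega\setminus K)$ with zero trace on $\partial_D\Omega\setminus K$ can be approximated (strongly in $W^{1,p}(\Omega\setminus K)$, i.e. in the sense of the extended-by-zero functions $z1_{\Omega\setminus K}$) by functions $\zeta$ that are \emph{smooth up to $K$}, namely restrictions to $\Omega\setminus K$ of functions in $C^\infty(\cl\Omega\setminus F)$ where $F$ is a finite union of Lipschitz arcs with $K\subset F$ and $\cl\Omega\setminus F$ has at most $m$ components, with $\zeta=0$ near $\partial_D\Omega\setminus F$. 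By a diagonal argument it then suffices to construct the recovery sequence when $v=z=\zeta$ is such a nice function.

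For such a $\zeta$, defined and smooth on $\cl\Omega\setminus F$ with $K\subset F$, I would define $z_n$ simply as the restriction of $\zeta$ to $\Omega\setminus K_n$ — more carefully, one should use the fact that, since $K_n\to K$ in the Hausdorff metric and $K\subset F$, for fixed $n$ the set $K_n$ need not be contained in $F$, so one first takes a slightly enlarged smooth (or Lipschitz) curve system $F$ and uses that $\zeta$ extends smoothly across the missing points; the construction in \cite{DalMasoToader2002} is designed precisely so that each approximant $\zeta$ is defined on a fixed open set whose complement is a finite Lipschitz graph, and one then shows that $\zeta1_{\Omega\setminus K_n}\to\zeta1_{\Omega\setminus K}$ and $\nabla\zeta1_{\Omega\setminus K_n}\to\nabla\zeta1_{\Omega\setminus K}$ strongly in $L^p$. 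The latter convergence follows from dominated convergence: $|\nabla\zeta1_{\Omega\setminus K_n}|^p\le |\nabla\zeta|^p\in L^1(\Omega)$ (here one uses that $\zeta$ is bounded with bounded gradient on the relevant set, or at least globally $L^p$), the integrand converges pointwise a.e. because $1_{\Omega\setminus K_n}\to1_{\Omega\setminus K}$ a.e. (since $\cL^2(K)=0$ and $\cL^2(K_n)=0$, the symmetric differences have vanishing measure in the limit, using Hausdorff convergence and the equiboundedness of the number of components to rule out "fat" limits), and the trace condition $z_n=0$ on $\partial_D\Omega\setminus K_n$ holds because $\zeta$ vanishes near $\partial_D\Omega$ off $F$.

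The hard part is the density step, i.e. showing that every $v\in W^{1,p}(\Omega\setminus K)$ with the prescribed trace can be approximated strongly by functions smooth up to a finite system of Lipschitz arcs containing $K$; this is exactly where the connectedness hypothesis (3) and $\cL^2(K)=0$ enter, and it is the scalar, possibly-nonquadratic reformulation of the approximation lemma from \cite{DalMasoToader2002}. Once that is granted, everything else is the elementary dominated-convergence argument above plus the two reductions (to zero boundary datum, and to smooth $\zeta$) combined by a diagonal extraction. I would state the density result as a separate lemma (or cite it from \cite{DalMasoToader2002} with the minor modifications needed to handle the general growth conditions \eqref{233}), and keep the proof of Lemma \ref{nd100} itself to the short reduction-plus-diagonal argument.
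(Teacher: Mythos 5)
There is a genuine gap, and it sits exactly at the point where the real work of the lemma is done. Your plan rests on a density step: approximating $v$ (after the reduction to $w=0$, which is fine) by functions $\zeta$ that are smooth on $\cl\Omega\setminus F$ for a \emph{finite union of Lipschitz arcs} $F$ with $K\subset F$. In the present setting this is impossible: $K$ has Hausdorff dimension $\alpha\in(1,2)$, while a finite union of Lipschitz arcs has dimension $1$, so no such $F$ containing $K$ exists. The result you attribute to \cite{DalMasoToader2002} is not stated there in this form, and no analogue can hold for fractional-dimensional cracks; this is precisely one of the new difficulties of the paper.

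Even granting a class of nice approximants $\zeta$ with jump set $F\supset K$, the second step fails: the ``restriction of $\zeta$ to $\Omega\setminus K_n$'' is in general \emph{not} an element of $W^{1,p}(\Omega\setminus K_n)$. Hausdorff convergence $K_n\to K$ does not give $F\subset K_n$ (think of $K_n$ a slightly translated copy of $K$), so $\zeta$ still jumps across the portion of $F$ not covered by $K_n$, and its distributional gradient in $\Omega\setminus K_n$ acquires a singular part there. Your dominated-convergence argument only controls the $L^p$ convergence of the pointwise gradients extended by zero; it says nothing about membership of $z_n$ in $W^{1,p}(\Omega\setminus K_n)$, which is the whole content of the lemma. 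The paper avoids this by not restricting at all: it extends the problem to a ball $B\supset\cl\Omega$, glues $v$ with $w$ across $\partial_D\Omega$ to get $z\in W^{1,p}(B\setminus(K\cup\partial_N\Omega))$, and invokes the stability theorem for Neumann problems under boundary variation \cite[Theorem 4.2]{DalMasoEbobissePonsiglione2003} (which uses condition (3) and $\cL^2(K)=0$) to produce genuine Sobolev functions $z_n\in W^{1,p}(B\setminus(K_n\cup\partial_N\Omega))$ converging in the sense of \eqref{nd103}; the Dirichlet condition is then restored exactly by subtracting a correction $\varphi_n\to0$ in $W^{1,p}(B)$ built from the outer traces. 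You need either to quote such a Mosco-type stability result or to reprove it; it cannot be replaced by restriction of a fixed smooth approximant.
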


\begin{proof}
Let us fix a large open ball $B$ containing $\cl\Omega$ and let us define $A_n:=B\setminus(K_n\cup\partial_N\Omega)$ and $A:=B\setminus(K\cup\partial_N\Omega)$.
By standard extension theorems, it is possible to extend $w_n$ and $w$ to the ball $B$ so that $w_n,w\in W^{1,p}(B)$ and $w_n\to w$ strongly in $W^{1,p}(B)$.
Let us define $z\in W^{1,p}(A)$ by $z:=v$ on $\Omega\setminus K$ and $z:=w$ on $B\setminus\Omega$.

Since $K_n\cup\partial_N\Omega\to K\cup\partial_N\Omega$ in the Hausdorff metric and consequently $\limsup_{n} \cL^2(K_n\cup\partial_N\Omega)\leq\cL^2(K\cup\partial_N\Omega)=0$, we can apply \cite[Theorem 4.2]{DalMasoEbobissePonsiglione2003} and we obtain that there exists a sequence $z_n\in W^{1,p}(A_n)$ such that 
\begin{subequations}\label{nd104}
\begin{eqnarray}
z_n1_{A_n} & \to & z1_{A}\qquad \text{strongly in $L^p(B)$,} \\ 
\nabla z_n1_{A_n}  &\to&  \nabla z1_{A}\qquad \text{strongly in $L^p(B;\R2)$.} 
\end{eqnarray}
\end{subequations}
In particular, $z_n\to w$ strongly in $W^{1,p}(B\setminus\cl\Omega)$.
Since $w_n\to w$ strongly in $W^{1,p}(B\setminus\cl\Omega)$, we have $z_n-w_n\to0$ strongly in $W^{1,p}(B\setminus\cl\Omega)$, therefore, by using a standard extension operator, we can find a sequence $\varphi_n\in W^{1,p}(B)$ such that $\varphi_n=z_n-w_n$ in $B\setminus\cl\Omega$ and $\varphi_n\to0$ strongly in $W^{1,p}(B)$.

We now define $v_n:=z_n-\varphi_n\in W^{1,p}(A_n)$.
By construction, the outer trace of $v_n$ on $\partial\Omega$ (\emph{i.e.}, the trace from $B\setminus\cl\Omega$) is equal to $w_n$, so the inner trace (\emph{i.e.}, the trace from $\Omega\setminus K_n$) satisfies $v_n=w_n$ on $\partial_D\Omega\setminus K_n$.
Moreover, the strong convergence of $\varphi_n$ to $0$ in $W^{1,p}(B)$ implies that \eqref{nd103} follows from \eqref{nd104}.
\end{proof}

\begin{proof}[Proof of Theorem \ref{DMT5.1}]
By using the boundary datum $w_n$ as a competitor in the minimum problem solved by $u_n$, we obtain $\cE_f(u_n,K_n)\leq\cE_f(w_n,K_n)$.
Since $w_n$ is bounded in $W^{1,p}(\Omega)$, we deduce from \eqref{233c} that $\nabla u_n1_{\Omega\setminus K_n}$ are bounded in $L^p(\Omega;\R2)$.

Therefore, there exists $\Psi\in L^p(\Omega;\R2)$ such that, up to a subsequence,
\begin{equation*}
\nabla u_n1_{\Omega\setminus K_n}\wto\Psi  \quad\text{in $L^p(\Omega;\R2)$.} 
\end{equation*}
Since $K_n\to K$ in the Hausdorff metric, it is easy to see that there exists $u^*\in L^{1,p}(\Omega\setminus K)$, with $u^*=w$ on $\partial_D\Omega\setminus K$, such that $\Psi=\nabla u^*1_{\Omega\setminus K}$ (see, e.g., \cite[Lemma 4.1]{DalMasoToader2002}).

We now want to prove that
\begin{equation}\label{nd110}
\cE_f(u^*,K)\leq\cE_f(v,K)\qquad \text{for all $v\in\cA(w,K)$}.
\end{equation}
We fix $v\in\cA(w,K)$ and, for every integer $k>0$, we consider the truncation $v^k:=(v\wedge(w+k))\vee(w-k)$.
It is easy to see that $v^k\in W^{1,p}(\Omega\setminus K)$, that $v^k=w$ on $\partial_D\Omega\setminus K$, and that $\nabla v^k\to\nabla v$ strongly in $L^p(\Omega\setminus K;\R2)$ as $k\to\infty$.
By \eqref{233} this implies that $\cE_f(v^k,K)\to\cE_f(v,K)$.
Therefore, to prove \eqref{nd110} it is enough to show that 
\begin{equation}\label{nd111}
\cE_f(u^*,K)\leq\cE_f(v^k,K)\qquad \text{for every $k$}.
\end{equation}
Fix an integer $k>0$ and apply Lemma \ref{nd100} with $v$ replaced by $v^k$ to obtain a sequence $v_n^k\in\cA(w_n,K_n)$, satisfying \eqref{nd102} with $v_n$ replaced by $v_n^k$ and $v$ replaced by $v^k$.
By the minimality of $u_n$ we have, for every $n$,
\begin{equation*}
\int_{\Omega} f(x,\nabla u_n1_{\Omega\setminus K_n})\,\de x\leq \int_{\Omega} f(x,\nabla v_n^k1_{\Omega\setminus K_n})\,\de x.
\end{equation*}
Since $\Phi\mapsto\int_\Omega f(x,\Phi)\,\de x$ is lower semicontinuous in $L^p(\Omega;\R2)$ with respect to weak convergence and continuous with respect to strong convergence, we obtain \eqref{nd111}, and therefore \eqref{nd110}.
By uniqueness, $\nabla u^*=\nabla u$ a.e.\@ in $\Omega\setminus K$.

We now prove that
\begin{equation}\label{nd113}
\int_\Omega f(x,\nabla u_n1_{\Omega\setminus K_n})\,\de x\to \int_\Omega f(x,\nabla u1_{\Omega\setminus K})\,\de x.
\end{equation}
Let us fix an integer $k>0$, and let $u^k:=(u\wedge(w+k))\vee(w-k)\in W^{1,p}(\Omega\setminus K)$. 
As before, we observe that $u^k=w$ on $\partial_D\Omega\setminus K$, and that $\nabla u^k\to\nabla u$ strongly in $L^p(\Omega\setminus K;\R2)$ as $k\to\infty$.
We apply now Lemma \ref{nd100} with $v$ replaced by $u^k$ to obtain a sequence $z_n^k\in\cA(w_n,K_n)$, satisfying \eqref{nd102} with $v_n$ replaced by $z_n^k$ and $v$ replaced by $u^k$.
By the minimality of $u_n$ we have, for every $n$,
\begin{equation*}
\int_{\Omega} f(x,\nabla u_n1_{\Omega\setminus K_n})\,\de x\leq \int_{\Omega} f(x,\nabla z_n^k1_{\Omega\setminus K_n})\,\de x.
\end{equation*}
Since $\Phi\mapsto\int_\Omega f(x,\Phi)\,\de x$ is continuous with respect to strong convergence, passing to the limit, first as $n\to\infty$ and then as $k\to\infty$, we obtain
\begin{equation*}
\limsup_{n\to\infty} \int_{\Omega} f(x,\nabla u_n1_{\Omega\setminus K_n})\,\de x\leq \int_{\Omega} f(x,\nabla u1_{\Omega\setminus K})\,\de x.
\end{equation*}
The opposite inequality with $\liminf$ follows from the weak convergence of $\nabla u_n1_{\Omega\setminus K_n}$ to $\nabla u1_{\Omega\setminus K}$ by lower semicontinuity.
This concludes the proof of \eqref{nd113}.

We can now apply \cite[Theorem 3]{Visintin}, obtaining that
\begin{equation}\label{nd116}
\nabla u_n1_{\Omega\setminus K_n} \to \nabla u1_{\Omega\setminus K} \qquad\text{in measure on $\Omega$}
\end{equation}
and that
\begin{equation*}
f(x,\nabla u_n1_{\Omega\setminus K_n})\to f(x,\nabla u1_{\Omega\setminus K}) \qquad\text{strongly in $L^1(\Omega)$.}
\end{equation*}
Therefore, by the lower estimate in \eqref{233c} we deduce that $|\nabla u_n1_{\Omega\setminus K_n}|^p$ is equiintegrable, which, together with \eqref{nd116}, implies that $\nabla u_n1_{\Omega\setminus K_n} \to \nabla u1_{\Omega\setminus K}$ strongly in $L^p(\Omega;\R2)$.
\end{proof}

\section{Quasistatic evolution}\label{555}
In this section we prove Theorem \ref{228a} by a time discretization procedure.
For each $n\in\N{}$, consider a partition $\{t_n^i\}_{i=0}^n$ of the time interval $[0,T]$ such that $0=t_n^0<t_n^1<\cdots<t_n^n=T$ and
\begin{equation}\label{556}
\max_{1\leq i\leq n}(t_n^i-t_n^{i-1})\to0,\quad\text{as $n\to\infty$}.
\end{equation}
Set $w_n^i:=w(t_n^i)$, $u_n^0:=u_0$, $K_n^{0}:=K_0$ for all $n$ and $i$.
We define $u_n^i$ and $K_n^i$ by induction on $i$: for every $i=1,\ldots,n$, the pair $(u_n^i,K_n^i)$ is a solution to the minimum problem
\begin{equation}\label{510}
\min_{(u,K)}\{\cE_f^\tot(u,K):u\in\cA(w_n^i,K),\,K\in\cK,\, K\supset K_n^{i-1}\}.
\end{equation}
\begin{proposition}\label{511}
Under conditions (1) and (2) of Definition \ref{238}, the minimum problem \eqref{510} has a solution $(u_n^i,K_n^i)$ and $\cH^\alpha(K_n^i)<+\infty$.
\end{proposition}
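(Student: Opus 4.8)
The plan is to run the direct method of the calculus of variations inside the induction on $i$ that defines $(u_n^i,K_n^i)$. First I would check that the constraint set of \eqref{510} is non-empty and its infimum $\mu$ is finite. By the inductive hypothesis $K_n^{i-1}\in\cK$ and $\cH^\alpha(K_n^{i-1})<+\infty$ (for $i=1$ this is the assumption on $K_0$ in Theorem \ref{228a}), and since $w_n^i\in W^{1,p}(\Omega)$ its restriction to $\Omega\setminus K_n^{i-1}$ lies in $\cA(w_n^i,K_n^{i-1})$; hence $(w_n^i,K_n^{i-1})$ is admissible, and the upper bound in \eqref{233c} gives $\cE_f(w_n^i,K_n^{i-1})<+\infty$, so $\mu\le\cE_f^{\tot}(w_n^i,K_n^{i-1})<+\infty$. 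Also $\mu\ge0$ since $f\ge0$ and $\cH^\alpha\ge0$.

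Next I would take a minimizing sequence and, replacing the displacement part of each term by a minimizer of $\cE_f(\cdot,K_j)$ over $\cA(w_n^i,K_j)$ (which exists by Remark \ref{2321}), assume it has the form $(u_j,K_j)$ with $u_j$ minimizing $\cE_f(\cdot,K_j)$, so that $\cE_f(u_j,K_j)\le\cE_f(w_n^i,K_j)$. The two bounds in \eqref{233c} then give a uniform bound on $\|\nabla u_j1_{\Omega\setminus K_j}\|_{L^p(\Omega;\R2)}$, while $f\ge0$ gives $\cH^\alpha(K_j)\le\cE_f^{\tot}(u_j,K_j)$, which is bounded along the minimizing sequence; in particular $\cH^\alpha(K_j)<+\infty$ and, since $\alpha<2$, $\cL^2(K_j)=0$. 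By the compactness property (1) I may pass to a subsequence with $K_j\to K$ in the Hausdorff metric and $K\in\cK$; by the lower semicontinuity property (2) we have $\cH^\alpha(K)\le\liminf_j\cH^\alpha(K_j)<+\infty$, hence $\cL^2(K)=0$ as well. Moreover each point of the fixed compact set $K_n^{i-1}$ lies in every $K_j$, hence has zero distance to $K$, so $K_n^{i-1}\subset K$. Passing to a further subsequence, $\nabla u_j1_{\Omega\setminus K_j}\wto\Psi$ weakly in $L^p(\Omega;\R2)$.

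The crucial step is the identification of the weak limit: since $K_j\to K$ in the Hausdorff metric and $\cL^2(K_j)=\cL^2(K)=0$, there exists $u^*\in L^{1,p}(\Omega\setminus K)$ with $u^*=w_n^i$ on $\partial_D\Omega\setminus K$ and $\Psi=\nabla u^*1_{\Omega\setminus K}$; this is \cite[Lemma 4.1]{DalMasoToader2002}, and I would emphasize that, unlike the strong convergence result of Section \ref{270}, it uses only Hausdorff convergence and not the connectedness property (3), so that conditions (1) and (2) alone suffice for the present statement. Let $\tilde u$ be a minimizer of $\cE_f(\cdot,K)$ over $\cA(w_n^i,K)$ (Remark \ref{2321}), so $\cE_f(\tilde u,K)\le\cE_f(u^*,K)$. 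Because $\cL^2(K)=\cL^2(K_j)=0$, we have $\cE_f(u^*,K)=\int_\Omega f(x,\Psi)\,\de x$ and $\cE_f(u_j,K_j)=\int_\Omega f(x,\nabla u_j1_{\Omega\setminus K_j})\,\de x$; since $\Phi\mapsto\int_\Omega f(x,\Phi)\,\de x$ is convex by \eqref{233b}, hence sequentially weakly lower semicontinuous on $L^p(\Omega;\R2)$, and since $\cH^\alpha(K)\le\liminf_j\cH^\alpha(K_j)$ by (2), I obtain
\begin{equation*}
\cE_f^{\tot}(\tilde u,K)\le\liminf_j\cE_f(u_j,K_j)+\liminf_j\cH^\alpha(K_j)\le\liminf_j\cE_f^{\tot}(u_j,K_j)=\mu.
\end{equation*}
As $(\tilde u,K)$ is admissible for \eqref{510}, this shows $(u_n^i,K_n^i):=(\tilde u,K)$ is a solution; and, being a minimizer, $\cE_f^{\tot}(u_n^i,K_n^i)=\mu<+\infty$ together with $f\ge0$ gives $\cH^\alpha(K_n^i)\le\mu<+\infty$, which both proves the claim and provides the inductive hypothesis needed at step $i+1$.

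The step I expect to be the main obstacle is the identification of $\Psi$ as $\nabla u^*1_{\Omega\setminus K}$ with the correct trace on $\partial_D\Omega\setminus K$: distributional curl-freeness of $\Psi$ in $\Omega\setminus K$ is straightforward, since every compact subset of $\Omega\setminus K$ is eventually disjoint from the $K_j$, but matching the boundary datum on the Lipschitz portion $\partial_D\Omega$ after removing the moving sets $K_j$ is the delicate point, and it is exactly what the cited lemma provides. Everything else is routine direct-method bookkeeping, and it is worth recording explicitly that only properties (1) and (2) of Definition \ref{238} enter the argument.
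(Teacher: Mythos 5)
Your proof is correct and follows essentially the same route as the paper's: direct method with compactness (1) for the Hausdorff convergence of the cracks, identification of the weak $L^p$ limit of the gradients as the gradient of an admissible function via \cite[Lemma 4.1]{DalMasoToader2002}, and then weak lower semicontinuity of the convex bulk energy combined with property (2) for the surface term. The extra details you supply (replacing the displacements by minimizers, checking $K_n^{i-1}\subset K$, and noting that the identification step does not use the connectedness property (3)) are all accurate and consistent with the paper's argument.
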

\begin{proof}
Assume, by induction, that $\cH^\alpha(K_n^{i-1})<+\infty$. Then the infimum in \eqref{510} is less than or equal to $\cE_f(w_n^i,K_n^{i-1})+\cH^\alpha(K_n^{i-1})<+\infty$.
Consider a minimizing sequence $(v_j,K_j)$ for \eqref{510}.
It must be $v_j\in \cA(w_n^{i},K_j)$, $K_j\in\cK$, and $K_j\supset K_n^{i-1}$ for all $j$.
Since $\cE_f^\tot(v_j,K_j)$ is uniformly bounded, so is $\cH^\alpha(K_j)$; by property (1) of Definition \ref{238}, there exists $K_n^{i}\in\cK$, with $K_n^{i}\supset K_n^{i-1}$, such that (up to a subsequence) $K_j\to K_n^{i}$ in the Hausdorff metric.

Arguing as in the proof of Theorem \ref{DMT5.1}, we can show that there exists $u_n^i\in \cA(w_n^i,K_n^i)$ such that
\begin{equation*}
\nabla v_j1_{\Omega\setminus K_j}\wto\nabla u_n^{i}1_{\Omega\setminus K_n^{i}}\quad \text{weakly in $L^p(\Omega;\R2)$,}
\end{equation*}
and therefore, by lower semincontinuity,
\begin{equation*}
\int_{\Omega\setminus K_n^{i}} f(x,\nabla u_n^{i})\,\de x\leq \liminf_{j\to\infty}\int_{\Omega\setminus K_j} f(x,\nabla v_j)\,\de x.
\end{equation*}
Thus, by property (2) of Definition \ref{238}, $(u_n^{i},K_n^{i})$ is a minimizer of problem \eqref{510} and $\cH^\alpha(K_n^i)<+\infty$.
\end{proof}

Since $w_n^i\in\cA(w_n^i,K_n^{i})$ and $K_n^i\supset K_n^{i-1}$, by \eqref{510} we have $\cE_f(u_n^i,K_n^i)+\cH^\alpha(K_n^i)\leq \break \cE_f(w_n^i,K_n^i)+\cH^\alpha(K_n^i)$.
By Proposition \ref{511}, we have $\cH^\alpha(K_n^i)<+\infty$, hence $\cE_f(u_n^i,K_n^i)\leq\cE_f(w_n^i,K_n^i)$.
Since $w\in AC([0,T];W^{1,p}(\Omega))$, estimates \eqref{233c} on $f$ give that
\begin{equation}\label{562}
\nabla u_n^i1_{\Omega\setminus K_n^i}\;\;\text{is bounded in $L^p(\Omega;\R2)$ uniformly with respect to $n$ and $i$.}
\end{equation}

Let us construct the (right-continuous) piecewise constant interpolants $u_n$ and $K_n$ by defining 
\begin{equation}\label{561}
u_n(t):=u_n^i,\qquad K_n(t):=K_n^i,\qquad\text{for $t\in[t_n^i,t_n^{i+1})$.}
\end{equation}
It follows from \eqref{562} that 
\begin{equation}\label{562a}
\nabla u_n(t)1_{\Omega\setminus K_n(t)}\;\;\text{is bounded in $L^p(\Omega;\R2)$ uniformly with respect to $n$ and $t$.}
\end{equation}
To continue the proof, we need the following extension of Helly's Theorem, proved in \cite[Theorem 6.3]{DalMasoToader2002}.

\begin{proposition}\label{560}
Let $K_n$ be a sequence of increasing functions from $[0,T]$ into the collection $\cK(\cl\Omega)$ of compact subsets of $\cl\Omega$.
Then there exist a subsequence, still denoted by $K_n$, and an increasing function $K\colon[0,T]\to\cK(\cl\Omega)$, such that $K_n(t)\to K(t)$ in the Hausdorff metric for every $t\in[0,T]$.
\end{proposition}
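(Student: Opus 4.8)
The plan is to run the classical Helly selection argument in the compact metric space $(\cK(\cl\Omega),d_\cH)$, whose compactness is the Blaschke selection theorem. The two elementary facts I shall use repeatedly are: (i) if $C_n\subset C_n'$ are compact with $C_n\to C$ and $C_n'\to C'$ in the Hausdorff metric, then $C\subset C'$; and (ii) for a family of compact subsets of $\cl\Omega$ that is monotone with respect to inclusion, the Hausdorff limit of an increasing sequence is the closure of the union and that of a decreasing sequence is the intersection, with $\dist{x}{\cdot}$ passing to the limit monotonically in each case. First I would fix a countable dense set $D\subset[0,T]$ with $0,T\in D$ and, by a diagonal argument based on the compactness of $\cK(\cl\Omega)$, extract a subsequence (not relabeled) such that $K_n(t)\to\widetilde K(t)$ in the Hausdorff metric for every $t\in D$; by (i) and the monotonicity of each $K_n$, the map $\widetilde K$ is increasing on $D$.

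Next I would extend $\widetilde K$ to all of $[0,T]$ by its two canonical monotone envelopes
\[
K^-(t)=\overline{\bigcup_{s\in D,\,s\le t}\widetilde K(s)},\qquad K^+(t)=\bigcap_{s\in D,\,s>t}\widetilde K(s)\ \ (t<T),\qquad K^+(T)=K^-(T).
\]
Both are increasing, $K^-(t)\subset K^+(t)$, $K^-=\widetilde K$ on $D$, and by (ii) one has $\dist{x}{K^-(t)}=\inf_{s\in D,\,s\le t}\dist{x}{\widetilde K(s)}$ and $\dist{x}{K^+(t)}=\sup_{s\in D,\,s>t}\dist{x}{\widetilde K(s)}$ for every $x$. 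The key point is then that the ``jump set'' $N:=\{t:K^-(t)\ne K^+(t)\}$ is countable. To see this, fix a countable dense set $\{x_k\}\subset\cl\Omega$; each function $t\mapsto\dist{x_k}{K^-(t)}$ is monotone in $t$, hence right-continuous off a countable set, and from the two displayed formulas one checks that at a point $t$ of right-continuity $\dist{x_k}{K^-(t)}=\dist{x_k}{K^+(t)}$. Thus if $t$ lies outside the countable union $N$ of these exceptional sets, then $\dist{x_k}{K^-(t)}=\dist{x_k}{K^+(t)}$ for every $k$, and density of $\{x_k\}$ forces $K^-(t)=K^+(t)$.

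For $t\notin N$ I would then prove $K_n(t)\to K^-(t)$ directly: for $s,s'\in D$ with $s<t<s'$ one has $K_n(s)\subset K_n(t)\subset K_n(s')$, so letting $n\to\infty$ and then $s\uparrow t$, $s'\downarrow t$ along $D$, the Kuratowski lower limit of $K_n(t)$ contains $K^-(t)$ while its upper limit is contained in $K^+(t)=K^-(t)$; hence $K_n(t)$ converges, to $K^-(t)$. Since $N\setminus D$ is countable, a further diagonal extraction yields a subsequence along which $K_n(t)$ also converges for every $t\in N\setminus D$, say to a compact set $L(t)$, necessarily with $K^-(t)\subset L(t)\subset K^+(t)$. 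Setting $K(t):=L(t)$ for $t\in N\setminus D$ and $K(t):=K^-(t)$ otherwise (so $K=\widetilde K$ on $D$), one gets $K_n(t)\to K(t)$ for every $t\in[0,T]$ along the final subsequence; and $K$ is increasing, because for $s<t$ one inserts $s'\in D\cap(s,t)$ and uses $L(s)\subset K^+(s)\subset\widetilde K(s')\subset K^-(t)\subset L(t)$, with the obvious simplifications when $s$ or $t$ is not in $N\setminus D$. If the $K_n(t)$ are required to be non-empty, so is $K(0)=\widetilde K(0)$, whence $K(t)\supset K(0)\neq\emptyset$ for all $t$ by monotonicity.

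I expect the main obstacle to be precisely the passage from $D$ to arbitrary $t$: an inclusion-increasing set-valued function may genuinely jump, so Hausdorff convergence of $K_n(t)$ is not automatic outside $D$, and one must both (a) show that the set of jump points is only countable — reduced above to the countability of the discontinuity sets of countably many ordinary monotone real functions $t\mapsto\dist{x_k}{K^-(t)}$ — and (b) carry out the second diagonal extraction over those countably many points while keeping the resulting limit map increasing, which is the bookkeeping in the last step.
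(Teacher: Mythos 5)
Your argument is correct, but note that the paper itself does not prove Proposition \ref{560}: it is imported verbatim from Dal Maso--Toader (2002), Theorem 6.3, so there is no internal proof to compare against. What you have written is a correct, self-contained reconstruction of the standard Helly selection argument for inclusion-monotone maps into the compact metric space of compact subsets of $\cl\Omega$, and it is essentially the argument behind the cited theorem: Blaschke compactness plus a diagonal extraction over a countable dense set $D$; reduction of the jump set $N=\{t:K^-(t)\neq K^+(t)\}$ to the (countable) discontinuity sets of the monotone scalar functions $t\mapsto\dist{x_k}{K^-(t)}$ for a countable dense family $\{x_k\}$; a sandwich argument at points of continuity; and a second diagonal extraction over the countable set $N\setminus D$, followed by the monotonicity bookkeeping. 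Two points are worth making explicit in a write-up. First, the Hausdorff distance of Definition \ref{300} is defined only for non-empty compact sets, so one should either assume $K_n(t)\neq\emptyset$ (as in the application, where $K_n(t)\supset K_0$) or handle the empty set separately --- you flag this at the end. Second, the step ``Kuratowski $\liminf$ contains $K^-(t)$ and Kuratowski $\limsup$ is contained in $K^+(t)=K^-(t)$, hence $K_n(t)\to K^-(t)$ in the Hausdorff metric'' silently uses the equivalence of Kuratowski and Hausdorff convergence for compact subsets of the compact set $\cl\Omega$; this is true but deserves one sentence, since either failure of Hausdorff convergence produces, after a compactness extraction, a point violating one of the two Kuratowski inclusions.
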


We are now ready to prove one of the main results of the paper.
\begin{proof}[Proof of Theorem \ref{228a}]
Let $u_n(t)$ and $K_n(t)$ be defined as in \eqref{561}, with $u_n^i$ and $K_n^i$ defined through \eqref{510}.
By Proposition \ref{560} there exist compact subsets $K(t)$ of $\cl\Omega$ and a subsequence, independent of $t$ and still denoted by $K_n$, such that $K_n(t)\to K(t)$ in the Hausdorff metric for every $t\in[0,T]$.
Notice that $t\mapsto K(t)$ is increasing and $K(t)\in\cK$ for every $t\in[0,T]$ by property (1) of Definition \ref{238}.
This gives the irreversibility condition (a) of Definition \ref{221a}.

Arguing as in \cite[Lemma 6.1]{DalMasoFrancfortToader2005} with $\cW=f$ and $\cF=\cG=0$, we obtain that
\begin{equation}\label{575}
\cE_f^\tot(u_n(t),K_n(t))\leq\cE_f^\tot(u_0,K_0)+\int_0^t \scp{\partial_\xi f(x,\nabla u_n(s)1_{\Omega\setminus K_n(s)})}{\nabla\dot w(s)}\de s+r_n,
\end{equation}
where the remainder $r_n\to0$ as $n\to\infty$.
This implies, in particular, that
\begin{equation*}
\cH^\alpha(K_n(t))\;\;\text{is bounded uniformly with respect to $n$ and $t$.}
\end{equation*}

To prove condition (b) of Definition \ref{221a}, let us fix $t\in(0,T]$.
The minimality of $(u_n^i,K_n^i)$ implies that 
\begin{equation}\label{563}
\cE_f(u_n(t),K_n(t)) \leq \cE_f(\hat u,\hat K)+\cH^\alpha(\hat K\setminus K_n(t)),
\end{equation}
for every $\hat K\in\cK$ with $\hat K\supset K_n(t)$ and for every $\hat u\in\cA(w_n(t),\hat K)$.
Taking $\hat K=K_n(t)$ we deduce that $u_n(t)$ is a solution to the miminum problem
\begin{equation*}
\min\{\cE_f(v,K_n(t)): v\in\cA(w_n(t),K_n(t))\}.
\end{equation*}
Let $u(t)$ be a solution of 
\begin{equation*}
\min\{\cE_f(v,K(t)): v\in\cA(w(t),K(t))\}.
\end{equation*}
By Theorem \ref{DMT5.1}, we have
\begin{equation}\label{567}
\nabla u_n(t)1_{\Omega\setminus K_n(t)}\to\nabla u(t)1_{\Omega\setminus K(t)} \qquad\text{strongly in $L^p(\Omega;\R2)$,}
\end{equation}
which implies, in particular, that
\begin{equation}\label{568}
\cE_f(u_n(t),K_n(t))\to\cE_f(u(t),K(t)).
\end{equation}
Moreover, from \eqref{562a} and \eqref{567} it follows that
\begin{equation}\label{562b}
\nabla u(t)1_{\Omega\setminus K(t)}\;\;\text{is bounded in $L^p(\Omega;\R2)$ uniformly with respect to $t$.}
\end{equation}

Let now fix a set $\hat K$ as in part (b) of Definition \ref{221a}.
Then there exists a minimizer $\hat v$ of $\cE_f(\cdot,\hat K)$ in $\cA(w(t),\hat K)$.
Moreover, for every $\hat K_n\in\cK$, with $\hat K_n\supset K_n(t)$, by \eqref{563}
\begin{equation}\label{570}
\cE_f(u_n(t),K_n(t))\leq \cE_f(\hat v_{n},\hat K_n)+\cH^\alpha(\hat K_n\setminus K_n(t)),
\end{equation}
where $\hat v_{n}$ is the minimizer of $\cE_f(\cdot,\hat K_n)$ in $\cA(w_n(t),\hat K_n)$.
We can choose the sets $\hat K_n$ that satisfy the extension property (4) in Definition \ref{238}, so that 
\begin{equation}\label{572}
\limsup_{n\to\infty} \cH^\alpha(\hat K_n\setminus K_n(t))\leq\cH^\alpha(\hat K\setminus K(t)).
\end{equation}
By Theorem \ref{DMT5.1}, we have also
\begin{equation}\label{571}
\cE_f(\hat v_{n},\hat K_n)\to \cE_f(\hat v,\hat K).
\end{equation}
Therefore, combining \eqref{568}, \eqref{570}, \eqref{572}, and \eqref{571}, we obtain
\begin{equation*}
\cE_f(u(t),K(t))\leq \cE_f(\hat v,\hat K)+\cH^\alpha(\hat K\setminus K(t)),
\end{equation*}
which gives the global stability property (b) of Definition \ref{221a}.

It remains to prove property (c) of Definition \ref{221a}, which is clearly equivalent to
\begin{equation}\label{575c}
\cE_f^\tot(u(t),K(t))=\cE_f^\tot(u_0,K_0)+\int_0^t \scp{\partial_\xi f(x,\nabla u(s)1_{\Omega\setminus K(s)})}{\nabla\dot w(s)}\de s
\end{equation}
for every $t\in[0,T]$.
One inequality can be proved by passing to the limit in \eqref{575} thanks to \eqref{233} and \eqref{567}.
To prove the opposite inequality, given $t\in[0,T]$, for each $n\in\N{}$ consider a partition $\{s_n^i\}_{i=0}^n$ such that $0=s_n^0<s_n^1<\cdots<s_n^n=t$ and satisfying \eqref{556}.
Arguing as in \cite[Lemma 7.1]{DalMasoFrancfortToader2005} with $\cW=f$ and $\cF=\cG=0$, we obtain that
\begin{equation}\label{575a}
\cE_f^\tot(u(t),K(t))\geq\cE_f^\tot(u_0,K_0)+\sum_{i=1}^n\int_{s_n^{i-1}}^{s_n^i} \scp{\partial_\xi f(x,\nabla u(s_n^i))}{\nabla\dot w(s)}\de s-r_n(t),
\end{equation}
where the remainder $r_n(t)\to0$ as $n\to\infty$.
In order to pass to the limit in \eqref{575a}, we introduce the (left-continuous) piecewise constant interpolant $\hat u_n$ and $\hat K_n$ defined by $\hat u_n(t):=u(s_n^i)$ and $\hat K_n(t)=K(s_n^i)$ for $s_n^{i-1}<t\leq s_n^i$.
With this notation \eqref{575a} reads
\begin{equation}\label{575b}
\cE_f^\tot(u(t),K(t))\geq\cE_f^\tot(u_0,K_0)+\int_0^t \big\langle\partial_\xi f(x,\nabla\hat u_n(s)1_{\Omega\setminus\hat K_n(s)}),\nabla\dot w(s)\big\rangle\,\de s-r_n(t).
\end{equation}
Moreover, by \eqref{562b}
\begin{equation*}
\nabla\hat u_n(s)1_{\Omega\setminus\hat K_n(s)}\;\;\text{is bounded in $L^p(\Omega;\R2)$ uniformly with respect to $n$ and $s\in[0,t]$.}
\end{equation*}

Let us prove that
\begin{equation}\label{576}
\nabla\hat u_n(s)1_{\Omega\setminus \hat K_n(s)}\to\nabla u(s)1_{\Omega\setminus K(s)}\qquad\text{in $L^p(\Omega;\R2)$ for almost every $s\in[0,t]$.}
\end{equation}
Arguing as in the proof of \cite[Theorem 6.3]{DalMasoToader2002} we obtain that for almost every $s\in[0,t]$ we have $K(s_n)\to K(s)$ in the Hausdorff metric whenever $s_n\to s$.
By the minimality of $u(s)$ and $u(s_n^i)$, \eqref{576} follows from Theorem \ref{DMT5.1}.
In view of \eqref{233b} and \eqref{233d}, properties \eqref{567} and \eqref{576} imply that
\begin{equation*}
\partial_\xi f(x,\nabla u_n(s)1_{\Omega\setminus K_n(s)})\to\partial_\xi f(x,\nabla u(s)1_{\Omega\setminus K(s)})\qquad\text{strongly in $L^q(\Omega;\R2)$,}
\end{equation*}
\begin{equation*}
\partial_\xi f(x,\nabla\hat u_n(s)1_{\Omega\setminus\hat K_n(s)})\to\partial_\xi f(x,\nabla u(s)1_{\Omega\setminus K(s)})\qquad\text{strongly in $L^q(\Omega;\R2)$,}
\end{equation*}
for almost every $s\in[0,t]$.
By the Dominated Convergence Theorem, inequalities \eqref{575} and \eqref{575b} imply \eqref{575c}.
\end{proof}

\section{The case of linearized elasticity}\label{sectlinel}
In this section we consider the case of planar elasticity. 
Theorem \ref{228a} can be generalized to the vectorial case by combining Theorem \ref{DMT5.1} and the results contained in \cite{Chambolle2003}.
This leads to Theorem \ref{228b} below.

We proceed by introducing the function space, the elastic energy, and the set of admissible displacements used in this analysis.
\begin{defin}[\cite{Chambolle2003}]\label{903}
Given an open subset $U$ of $\cl\Omega$, we define
\begin{equation*}
LD^2(U):=\{u\in L^2_\loc(U;\R2): Eu\in L^2(U;\M{2\times2}_\sym)\},
\end{equation*}
where $Eu:=\tfrac12(\nabla u+(\nabla u)^\top)$ is the symmetrized gradient of $u$ and $\M{2\times2}_\sym$ is the space of $2{\times}2$ symmetric matrices.
\end{defin}
We shall prove in Proposition \ref{a111} that $LD^2(U)=W^{1,2}(U;\R2)$ if $\partial U$ is Lipschitz. 
More in general, if $\partial U$ is Lipschitz near a point $x\in\partial U$ and $U$ lies locally on one side of $\partial U$, then there exists an open neighborhood $V$ of $x$ such that $U\cap V$ has a Lipschitz boundary and therefore $u\in W^{1,2}(U\cap V;\R2)$.
This allows us to define the trace of a function $u\in LD^2(U)$ on the locally Lipschitz part of the boundary $\partial U$.

For every $x\in\Omega$ the space-dependent elasticity tensor $\C(x)$ is a symmetric linear operator of $\M{2\times2}_\sym$ into itself, such that $x\mapsto\C(x)$ is measurable.
We assume the usual ellipticity estimates: there exist constants $0<\alpha_\C\leq \beta_\C<+\infty$ such that
\begin{equation}\label{904}
\alpha_\C |A|^2\leq\C(x)A{:}A\leq\beta_\C |A|^2,\qquad\text{for every $A\in\M{2\times2}_\sym$},
\end{equation}
where $|A|$ denotes the Euclidean norm of $A\in\M{2\times2}_\sym$ and $A{:}B$ is the Euclidean scalar product of $A,B\in\M{2\times2}_\sym$.

\begin{defin}[Elastic energy and admissible displacements]\label{302a}
Let $K$ be a compact subset of $\cl\Omega$.
For every $u\in LD^2(\Omega\setminus K)$, the elastic energy we will consider is defined by
\begin{equation*}
\cE_\C(u,K):=\frac12\int_{\Omega\setminus K} \C(x)Eu{:}Eu\,\de x
\end{equation*}
Given $w\in W^{1,2}(\Omega;\R2)$, the set of admissible displacements determined by $w$ and $K$ is defined by
\begin{equation*}
\cA_{LD^2}(w,K):=\{u\in LD^2(\Omega\setminus K): u=w \text{ on } \partial_D\Omega\setminus K\},
\end{equation*}
where the equality on $\partial_D\Omega\setminus K$ is in the sense of traces.
\end{defin}
In analogy to \eqref{307a}, we introduce the total energy
\begin{equation*}
\cE_\C^\tot(u,K):=\cE_\C(u,K)+\cH^\alpha(K).
\end{equation*}

\begin{defin}[Quasistatic evolution]\label{221b}
Let $\C$ satisfy \eqref{904}, let $w{\in} AC([0,1];W^{1,2}(\Omega;\R2))$, and let $\cK$ be an $\alpha$-admissible collection of compact subsets of $\cl\Omega$ acording to Definition \ref{238}.
An \emph{irreversible quasistatic evolution of minimum energy configurations for $\cE_{\C}^{\tot}$} corresponding to these data is a function $t\mapsto(u(t),K(t))$
satisfying the following conditions:
\begin{itemize}
\item[(a)] (irreversibility) $K(s)\subset K(t)$ for $0\leq s\leq t\leq T$;
\item[(b)] (global stability) for $0\leq t\leq T$ we have $K(t)\in\cK$, $u(t)\in\cA_{LD}(w(t),K(t))$, and $\cE_\C^{\tot}(u(t),K(t))\leq\cE_\C^{\tot}(\hat u,\hat K)$ for all $\hat K\in\cK$ with $\hat K\supset K(t)$ and for all $\hat u\in\cA_{LD^2}(w(t),\hat K)$;
\item[(c)] (energy balance) $t\mapsto\cE_\C^{\tot}(u(t),K(t))$ is absolutely continuous on $[0,T]$ and
\begin{equation*}
\frac{\de}{\de t}\cE_\C^{\tot}(w(t),K(t))=\langle \C(x)Eu(t)1_{\Omega\setminus K(t)},E\dot w(t)\rangle\quad \text{ for a.e. } t\in[0,T].
\end{equation*}
\end{itemize}
\end{defin}

\begin{theorem}\label{228b}
Let $\C$, $w$, and $\cK$ be as in Definition \ref{221b}, let $K_0\in\cK$ with $\cH^\alpha(K_0)<+\infty$, and let $u_0\in\cA_{LD^2}(w(0),K_0)$.
Assume that 
\begin{equation*}
\cE_\C^{\tot}(u_0,K_0)\leq\cE_\C^{\tot}(\hat u,\hat K)\quad \text{for all $\hat K\in\cK$ with $\hat K\supset K_0$ and for all $\hat u\in\cA_{LD^2}(w(0),\hat K)$.}
\end{equation*}
Then there exists an irreversible quasistatic evolution $t\mapsto(u(t),K(t))$ for $\cE_\C^{\tot}$ such that $K(0)=K_0$ and $u(0)=u_0$.
\end{theorem}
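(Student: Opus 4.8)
The plan is to follow verbatim the scheme used to prove Theorem \ref{228a} in Section \ref{555}, replacing the Deny--Lions space $L^{1,p}(\Omega\setminus K)$ by $LD^2(\Omega\setminus K)$, the gradient $\nabla u$ by the symmetrized gradient $Eu$, the exponent $p$ by $2$, and the (possibly nonquadratic) energy $\cE_f$ by the quadratic energy $\cE_\C$. The only genuinely new ingredient is a vectorial counterpart of the convergence-of-minimizers result, so I would first establish the following analogue of Theorem \ref{DMT5.1}: if $K_n$ satisfies condition (3) of Definition \ref{238}, $K_n\to K$ in the Hausdorff metric with $\cL^2(K)=0$, $\C$ satisfies \eqref{904}, $w_n\to w$ strongly in $W^{1,2}(\Omega;\R2)$, and $u_n,u$ solve $\min\{\cE_\C(v,K_n):v\in\cA_{LD^2}(w_n,K_n)\}$ and $\min\{\cE_\C(v,K):v\in\cA_{LD^2}(w,K)\}$ respectively, then $Eu_n1_{\Omega\setminus K_n}\to Eu1_{\Omega\setminus K}$ strongly in $L^2(\Omega;\M{2\times2}_\sym)$. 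The key tool is the vectorial Mosco-type approximation lemma of \cite{Chambolle2003}, which plays the role of Lemma \ref{nd100}: given the same data and any $v\in LD^2(\Omega\setminus K)$ with $v=w$ on $\partial_D\Omega\setminus K$, there is $v_n\in LD^2(\Omega\setminus K_n)$ with $v_n=w_n$ on $\partial_D\Omega\setminus K_n$ and $Ev_n1_{\Omega\setminus K_n}\to Ev1_{\Omega\setminus K}$ strongly in $L^2$. To deduce it from \cite{Chambolle2003} one uses the identification $LD^2(U)=W^{1,2}(U;\R2)$ for $U$ with Lipschitz boundary (Proposition \ref{a111}), enlarges $\Omega$ to an open ball $B$, glues $w$ across $\partial_N\Omega$ and $\partial B$ exactly as in the proof of Lemma \ref{nd100}, and corrects the recovery sequence by a term tending to $0$ in $W^{1,2}(B;\R2)$ so that the inner trace matches $w_n$ on $\partial_D\Omega\setminus K_n$.

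Granting this lemma, the vectorial convergence theorem follows by repeating the proof of Theorem \ref{DMT5.1}: the comparison $\cE_\C(u_n,K_n)\le\cE_\C(w_n,K_n)$ and the coercivity in \eqref{904} give a uniform $L^2$ bound on $Eu_n1_{\Omega\setminus K_n}$; extracting a weak limit identifies it as $Eu^*1_{\Omega\setminus K}$ for some $u^*\in\cA_{LD^2}(w,K)$ (by the vectorial analogue of \cite[Lemma 4.1]{DalMasoToader2002}, which in turn relies on Chambolle's Korn-type inequality and condition (3)); the recovery sequences above, tested against the minimality of $u_n$, show by weak lower semicontinuity that $u^*$ minimizes $\cE_\C(\cdot,K)$, hence by strict convexity of $A\mapsto\C(x)A{:}A$ one gets $Eu^*=Eu$ a.e.\ on $\Omega\setminus K$, and a second application gives $\cE_\C(u_n,K_n)\to\cE_\C(u,K)$. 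Since the energy is quadratic and $L^2$ is a Hilbert space, weak convergence of $Eu_n1_{\Omega\setminus K_n}$ together with convergence of the (equivalent) norms $\cE_\C(u_n,K_n)$ upgrades directly to strong $L^2$ convergence; unlike in the scalar case, no appeal to \cite{Visintin} is needed.

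With the vectorial version of Theorem \ref{DMT5.1} in hand, the proof of Theorem \ref{228a} transfers with essentially no further change. One sets up the time discretization with the incremental problems $\min\{\cE_\C^\tot(u,K):u\in\cA_{LD^2}(w_n^i,K),\,K\in\cK,\,K\supset K_n^{i-1}\}$; the existence of solutions and the bound $\cH^\alpha(K_n^i)<+\infty$ follow exactly as in Proposition \ref{511}, using properties (1) and (2) of Definition \ref{238} and the lower semicontinuity of $\cE_\C(\cdot,K)$ under weak $L^2$ convergence of symmetrized gradients. From $\cE_\C(u_n^i,K_n^i)\le\cE_\C(w_n^i,K_n^i)$ and \eqref{904} one obtains a uniform $L^2$ bound on the symmetrized gradients of the piecewise-constant interpolants; Proposition \ref{560} yields an increasing $K(t)\in\cK$ with $K_n(t)\to K(t)$ for every $t$, which is the irreversibility (a). For (b) one uses the incremental minimality $\cE_\C(u_n(t),K_n(t))\le\cE_\C(\hat v_n,\hat K_n)+\cH^\alpha(\hat K_n\setminus K_n(t))$, chooses the competitors $\hat K_n$ supplied by the extension property (4) so that $\limsup_n\cH^\alpha(\hat K_n\setminus K_n(t))\le\cH^\alpha(\hat K\setminus K(t))$ by \eqref{220}, and passes to the limit in the elastic terms via the vectorial convergence theorem applied to $u_n(t)$ and to $\hat v_n$. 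For (c) one derives the discrete energy inequality as in \cite[Lemma 6.1]{DalMasoFrancfortToader2005} (with the quadratic $\cW$), passes to the limit for one inequality using the strong convergence of symmetrized gradients, and obtains the reverse inequality through the Riemann-sum estimate of \cite[Lemma 7.1]{DalMasoFrancfortToader2005} together with the Dominated Convergence Theorem; here $\C(x)Eu_n(s)1_{\Omega\setminus K_n(s)}\to\C(x)Eu(s)1_{\Omega\setminus K(s)}$ strongly in $L^2$ for a.e.\ $s$ is immediate from the strong convergence of the symmetrized gradients.

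The step I expect to be the main obstacle is verifying that the approximation result of \cite{Chambolle2003} is available in precisely the form stated above: with nonzero, $n$-dependent Dirichlet data $w_n$ on the moving boundary $\partial_D\Omega\setminus K_n$, with the Neumann portion $\partial_N\Omega$ incorporated into the removed set, and in the space $LD^2$ rather than $W^{1,2}$. The remedy is the normalization already used for Lemma \ref{nd100}: reduce to the case of zero boundary data on a fixed ambient ball by the gluing-and-correction procedure, use Proposition \ref{a111} to pass between $LD^2$ and $W^{1,2}$ on the Lipschitz pieces near $\partial\Omega$, and invoke \cite{Chambolle2003} for the interior approximation in the presence of a uniform bound on the number of connected components. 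Everything else is a routine transcription of Sections \ref{270} and \ref{555} to the vectorial, quadratic setting.
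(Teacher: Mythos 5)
Your proposal is correct and follows essentially the same route as the paper: the paper also reduces Theorem \ref{228b} to a vectorial analogue of Theorem \ref{DMT5.1} (its Theorem \ref{DMT5.1a}) and then repeats the proof of Theorem \ref{228a} verbatim, obtaining the recovery sequences by first invoking Chambolle's density theorem on the fixed domain $\Omega\setminus K$ to replace $v\in LD^2(\Omega\setminus K)$ by $v^k\in W^{1,2}(\Omega\setminus K;\R2)$ with $Ev^k\to Ev$ (this is the step where full gradient control is gained, playing the role of the truncation in the scalar case) and then applying the scalar Lemma \ref{nd100} component-wise to transfer from $K$ to $K_n$. Your observation that in the quadratic case the upgrade from weak to strong $L^2$ convergence follows from convergence of the energies in the Hilbert-space norm, without appealing to Visintin, is a legitimate minor simplification.
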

\begin{proof}
It is enough to repeat the proof of Theorem \ref{228a} in Section \ref{555} with obvious modifications, replacing Theorem \ref{DMT5.1} on the convergence of minimizers with Theorem \ref{DMT5.1a} below.
\end{proof}
\begin{theorem}\label{DMT5.1a}
Let $K_n$ be a sequence of compact subsets of $\cl\Omega$ satysfying conditions (3) 
of Definition \ref{238}.
Assume that $K_n$ converges to $K\subset\cl\Omega$ in the Hausdorff metric and that $\cL^2(K)=0$. 
Let $\C$ satisfy \eqref{904}, let $w_n\to w$ strongly in $W^{1,2}(\Omega;\R2)$, and let $u_n$ and $u$ be the solutions to the minimum problems 
\begin{equation}\label{332a}
\min\{\cE_{\C}(v,K_n):v\in\cA_{LD^2}(w_n,K_n)\} \qquad\text{and}\qquad\min\{\cE_{\C}(v,K):v\in\cA_{LD^2}(w,K)\}.
\end{equation}
Then $Eu_n1_{\Omega\setminus K_n}\to Eu1_{\Omega\setminus K}$ strongly in $L^2(\Omega;\M{2\times2}_\sym)$.
\end{theorem}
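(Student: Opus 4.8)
The plan is to reproduce, almost verbatim, the proof of Theorem \ref{DMT5.1}, with three modifications: the nonquadratic functional $\cE_f$ is replaced by the quadratic $\cE_\C$; the scalar Deny--Lions approximation of Lemma \ref{nd100} is replaced by its vectorial analogue, based on the density/approximation result of \cite{Chambolle2003}; and the final passage from convergence of the energies to strong convergence of the (symmetrized) gradients is carried out directly by exploiting the Hilbert space structure, so that the appeal to \cite[Theorem 3]{Visintin} is no longer needed.

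First I would use $w_n$ as a competitor in the first minimum problem in \eqref{332a}, obtaining $\cE_\C(u_n,K_n)\le\cE_\C(w_n,K_n)$; since $w_n$ is bounded in $W^{1,2}(\Omega;\R2)$, the lower ellipticity bound in \eqref{904} gives that $Eu_n1_{\Omega\setminus K_n}$ is bounded in $L^2(\Omega;\M{2\times2}_\sym)$. Passing to a subsequence, $Eu_n1_{\Omega\setminus K_n}\wto\Psi$ weakly in $L^2(\Omega;\M{2\times2}_\sym)$, and, exactly as in \cite[Lemma 4.1]{DalMasoToader2002} (whose vectorial counterpart, valid under the connectedness hypothesis (3), is contained in \cite{Chambolle2003}), the convergence $K_n\to K$ in the Hausdorff metric allows one to identify $\Psi=Eu^*1_{\Omega\setminus K}$ for some $u^*\in LD^2(\Omega\setminus K)$ with $u^*=w$ on $\partial_D\Omega\setminus K$ in the sense of traces (recall $LD^2=W^{1,2}$ near a Lipschitz part of the boundary by Proposition \ref{a111}, which gives meaning to traces).

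The central step is the vectorial version of Lemma \ref{nd100}: for every $v\in\cA_{LD^2}(w,K)$ there is a sequence $v_n\in\cA_{LD^2}(w_n,K_n)$ with $v_n1_{\Omega\setminus K_n}\to v1_{\Omega\setminus K}$ and $Ev_n1_{\Omega\setminus K_n}\to Ev1_{\Omega\setminus K}$ strongly in $L^2$. This is proved exactly as Lemma \ref{nd100}: work in $A_n=B\setminus(K_n\cup\partial_N\Omega)$ and $A=B\setminus(K\cup\partial_N\Omega)$, extend $w_n,w$ to the ball $B$, apply the approximation theorem of \cite{Chambolle2003} (which here plays the role of \cite[Theorem 4.2]{DalMasoEbobissePonsiglione2003}, using $\cL^2(K)=0$ and hypothesis (3)), and correct the outer trace on $\partial\Omega$ by a perturbation $\varphi_n\to0$ in $W^{1,2}(B;\R2)$. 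Granted this lemma, the rest of the argument transcribes directly: minimality of $u_n$ together with the weak $L^2$ lower semicontinuity of $\Phi\mapsto\tfrac12\int_\Omega\C(x)\Phi{:}\Phi\,\de x$ yields $\cE_\C(u^*,K)\le\cE_\C(v,K)$ for all $v\in\cA_{LD^2}(w,K)$ — no truncation step is needed here, since the recovery sequence is produced directly in $LD^2$. Since $\C(x)$ is positive definite, the minimizer of $\cE_\C(\cdot,K)$ in $\cA_{LD^2}(w,K)$ has a unique symmetrized gradient, hence $Eu^*=Eu$ a.e.\ in $\Omega\setminus K$, so the whole sequence (not just a subsequence) satisfies $Eu_n1_{\Omega\setminus K_n}\wto Eu1_{\Omega\setminus K}$.

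Finally, applying the approximation lemma with $v=u$ to get a recovery sequence $z_n\in\cA_{LD^2}(w_n,K_n)$, minimality of $u_n$ and continuity of $\Phi\mapsto\tfrac12\int_\Omega\C\Phi{:}\Phi\,\de x$ with respect to strong $L^2$ convergence give $\limsup_n\cE_\C(u_n,K_n)\le\cE_\C(u,K)$, while weak lower semicontinuity gives the reverse inequality with $\liminf$; hence $\cE_\C(u_n,K_n)\to\cE_\C(u,K)$, i.e.\ $\int_\Omega\C(x)(Eu_n1_{\Omega\setminus K_n}){:}(Eu_n1_{\Omega\setminus K_n})\,\de x\to\int_\Omega\C(x)(Eu1_{\Omega\setminus K}){:}(Eu1_{\Omega\setminus K})\,\de x$. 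By \eqref{904} the bilinear form $(\Phi,\Theta)\mapsto\int_\Omega\C(x)\Phi{:}\Theta\,\de x$ is a scalar product on $L^2(\Omega;\M{2\times2}_\sym)$ equivalent to the standard one; a weakly convergent sequence whose associated squared norms converge is strongly convergent in this Hilbert space, and therefore $Eu_n1_{\Omega\setminus K_n}\to Eu1_{\Omega\setminus K}$ strongly in $L^2(\Omega;\M{2\times2}_\sym)$. The only real obstacle is the first part of the third paragraph — verifying that the results of \cite{Chambolle2003} genuinely supply the Mosco-type convergence of $\cA_{LD^2}(w_n,K_n)$ to $\cA_{LD^2}(w,K)$ under hypotheses (3) and $\cL^2(K)=0$; the remaining steps are routine transcriptions of the scalar argument, actually simpler because the energy is quadratic.
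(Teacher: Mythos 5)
Your overall architecture is the right one, and your final step is a genuine (and correct) simplification of the paper's: in the quadratic case, weak convergence plus convergence of the energies yields strong convergence directly from the Hilbert structure of $(\Phi,\Theta)\mapsto\int_\Omega\C(x)\Phi{:}\Theta\,\de x$, so the appeal to Visintin's theorem used in the scalar proof can be dropped. The compactness, the identification of the weak limit, and the uniqueness argument for the whole sequence are all fine.

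The problem is precisely the step you flag as ``the only real obstacle'': the construction of recovery sequences in $\cA_{LD^2}(w_n,K_n)$ for an arbitrary $v\in\cA_{LD^2}(w,K)$. You propose to obtain them by transcribing the proof of Lemma \ref{nd100} with ``the approximation theorem of \cite{Chambolle2003}'' substituted for \cite[Theorem 4.2]{DalMasoEbobissePonsiglione2003}. This substitution does not work as described: Chambolle's Theorem 1 is a density result on the \emph{fixed} cracked domain $\Omega\setminus K$ (it approximates $v\in LD^2(\Omega\setminus K)$ by $v^k\in W^{1,2}(\Omega\setminus K;\R2)$ with $Ev^k\to Ev$ in $L^2$), whereas the role played by \cite[Theorem 4.2]{DalMasoEbobissePonsiglione2003} inside Lemma \ref{nd100} is a stability result for the \emph{varying} domains $\Omega\setminus K_n$. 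A direct $LD^2$-analogue of the latter is exactly what is not available (it would need a Korn-type control uniform in the degenerating cracked domains), and you neither prove it nor cite a precise statement. The paper resolves this with a two-step approximation that your proposal is missing: first approximate $v$ by $v^k\in W^{1,2}(\Omega\setminus K;\R2)$ with $Ev^k\to Ev$ strongly, via \cite[Theorem 1]{Chambolle2003} (this replaces the truncation step of the scalar proof, which is unavailable for symmetrized gradients); then apply the scalar Lemma \ref{nd100} \emph{component-wise} to each $v^k$ --- legitimate because $v^k$ has a full gradient in $L^2$ --- obtaining $v_n^k\in\cA_{LD^2}(w_n,K_n)$ with $\nabla v_n^k1_{\Omega\setminus K_n}\to\nabla v^k1_{\Omega\setminus K}$, hence $Ev_n^k1_{\Omega\setminus K_n}\to Ev^k1_{\Omega\setminus K}$, strongly in $L^2$; finally pass to the limit first in $n$ and then in $k$, as in the second part of the proof of Theorem \ref{DMT5.1}. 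With this correction your argument closes.
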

\begin{proof}
Arguing as in the first part of the proof of Theorem \ref{DMT5.1}, it is easy to see that there exists $u^*\in\cA_{LD^2}(w,K)$ such that $Eu_n1_{\Omega\setminus K_n}\wto Eu^*1_{\Omega\setminus K}$ up to a subsequence.
To prove that $u^*$ is a solution of the second minimum problem in \eqref{332a}, we now show that
\begin{equation}\label{906}
\cE_\C(u^*,K)\leq\cE_\C(v,K),\qquad\text{for every $v\in\cA_{LD^2}(w,K)$}.
\end{equation}
As in the proof of Theorem \ref{DMT5.1} we now approximate $v$ by a sequence $v^k$ of slightly more regular functions.
More precisely, using the approximation result \cite[Theorem 1]{Chambolle2003}, we find a sequence $v^k\in W^{1,2}(\Omega\setminus K;\R2)$ such that $Ev^k\to Ev$ strongly in $L^2(\Omega\setminus K;\M{2\times2}_\sym)$.
By applying Lemma \ref{nd100} component-wise, each $v^k$ is now approximated by a sequence $v_n^k\in W^{1,2}(\Omega\setminus K_n;\R2)$ such that $v_n^k=w_n$ on $\partial_D\Omega\setminus K_n$ and $Ev_n^k1_{\Omega\setminus K_n}\to Ev^k1_{\Omega\setminus K}$ strongly in $L^2(\Omega;\M{2\times2}_\sym)$.
We now continue as in the second part of the proof of Theorem \ref{DMT5.1} to prove \eqref{906}.
\end{proof}

\section{Appendix}
In this Appendix we prove the following result.
\begin{proposition}\label{a111}
Let $U\subset\R2$ be a bounded open set with Lipschitz boundary.
Then $LD^2(U)=W^{1,2}(U;\R2)$.
\end{proposition}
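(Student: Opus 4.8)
The inclusion $W^{1,2}(U;\R2)\subseteq LD^2(U)$ is immediate, since if $\nabla u\in L^2(U;\M{2\times2})$ then its symmetric part $Eu$ lies in $L^2(U;\M{2\times2}_\sym)$, and $u\in L^2(U;\R2)\subseteq L^2_\loc(U;\R2)$. The content is the reverse inclusion, and the plan is to deduce it from Korn's second inequality on bounded Lipschitz domains. The obstacle to overcome is that a function $u\in LD^2(U)$ is only \emph{a priori} locally square integrable on $U$, so Korn's inequality cannot be applied to $u$ itself; the key device will be a translation that moves $\overline U$ strictly inside $U$ and is affine, so that it interacts well with the symmetric gradient.

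I would first prove interior regularity: $u\in W^{1,2}_\loc(U;\R2)$. Fix $x_0\in U$ and $\rho>0$ with $\overline{B(x_0,2\rho)}\subset U$, set $B:=B(x_0,\rho)$, and let $u_\eps:=\eta_\eps*u$ be a mollification. Then $u_\eps$ is smooth on $B$ for small $\eps$, $Eu_\eps=\eta_\eps*Eu\to Eu$ in $L^2(B)$, and $u_\eps\to u$ in $L^2(B)$, so $\|u_\eps\|_{L^2(B)}+\|Eu_\eps\|_{L^2(B)}$ is bounded uniformly in $\eps$. Korn's second inequality on the ball gives infinitesimal rigid motions $r_\eps$ with $\|u_\eps-r_\eps\|_{W^{1,2}(B)}\le C\,\|Eu_\eps\|_{L^2(B)}$; evaluating on a fixed concentric sub-ball and using that the space of rigid motions is finite dimensional, $\|r_\eps\|_{W^{1,2}(B)}$ stays bounded, hence so does $\|u_\eps\|_{W^{1,2}(B)}$. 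A subsequence converges weakly in $W^{1,2}(B)$, necessarily to $u$, so $u\in W^{1,2}(B;\R2)$; as such balls exhaust $U$, the claim follows.

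Next I would handle regularity up to $\partial U$. By a finite open cover of $\overline U$ with a subordinate partition of unity---the piece interior to $U$ being controlled by the previous step---it suffices to show $u\in W^{1,2}$ near each boundary point. Near such a point, after an orthogonal change of coordinates, $U$ coincides locally with the region above the graph of a Lipschitz function; I would fix a bounded, connected, Lipschitz domain $D$ that is itself such a ``vertical subgraph'', contains $U\cap V$ for a smaller neighbourhood $V$ of the point, and satisfies $\overline{D+he_2}\subset U$ for all small $h>0$ (here $e_2=(0,1)$ is the coordinate direction in which $U$ is a subgraph). For such $h$ put $u^{(h)}(y):=u(y+he_2)$. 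Then $u^{(h)}\in W^{1,2}(D;\R2)$ by the interior-regularity step, and $Eu^{(h)}(y)=(Eu)(y+he_2)$, so $\|Eu^{(h)}\|_{L^2(D)}\le\|Eu\|_{L^2(U)}$ uniformly in $h$. Now Korn's second inequality on the \emph{fixed} Lipschitz domain $D$ yields rigid motions $r_h$ with $\|u^{(h)}-r_h\|_{W^{1,2}(D)}\le C_D\,\|Eu^{(h)}\|_{L^2(D)}$; since translations are continuous in $L^2_\loc(U)$, the functions $u^{(h)}$ are bounded in $L^2(B')$ for any fixed ball $B'$ with $\overline{B'}\subset D$, whence $r_h$---and therefore $u^{(h)}$---is bounded in $W^{1,2}(D;\R2)$. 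Extracting a weakly convergent subsequence in $W^{1,2}(D)$ and identifying its limit via Rellich's theorem on $D$ together with the convergence $u^{(h)}\to u$ in $L^2_\loc(U)$ (valid on every compact subset of $U$, and these exhaust $D$), I obtain $u\in W^{1,2}(D;\R2)$. Reassembling the local pieces with the partition of unity gives $u\in W^{1,2}(U;\R2)$, hence equality with $LD^2(U)$.

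I expect Step 2 to be the crux: controlling $u$ near $\partial U$ while knowing only $u\in L^2_\loc(U)$. The translation $u^{(h)}$ is precisely what makes it go through---it turns $u$ into a genuine $W^{1,2}$ function (by interior regularity) while keeping $\|Eu^{(h)}\|_{L^2(D)}$ dominated by $\|Eu\|_{L^2(U)}$, so that Korn's inequality can be applied to the bounded sequence $(u^{(h)})$ on a fixed Lipschitz domain rather than to $u$. One could equivalently work on a star-shaped Lipschitz piece of $U$ and replace $u^{(h)}$ by a dilation of $u$ toward an interior point: this map is again affine, so $Eu$ stays under control, and the rest of the argument is unchanged.
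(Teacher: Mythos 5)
Your proof is correct and follows essentially the same strategy as the paper's: translate $u$ in the coordinate direction transverse to the local Lipschitz graph so that the shifted function is under control, apply Korn's inequality on a fixed Lipschitz domain with the lower-order term estimated on a subset compactly contained in $U$, and pass to the limit in the translation parameter. The only (harmless) difference is organizational: you first establish interior $W^{1,2}_\loc$ regularity so that Korn's inequality applies directly to the translates $u^{(h)}$, whereas the paper mollifies the translated function and applies Korn to the smooth approximations $u^\eta_\eps$ before letting $\eps\to0$ and then $\eta\to0$.
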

\begin{proof}
Let $u\in LD^2(U)$.
It is enough to prove that for all $x_0\in\partial U$ there exist an open rectangle $Q$ centered at $x_0$ such that $u\in W^{1,2}(Q\cap U;\R2)$.
It is not restrictive to assume that $x_0=0$. 
There exist an orthogonal coordinate system and two open intervals $I_a:=(-a,a)$ and $I_b:=(-b,b)$ such that $(I_a{\times}I_b)\cap U$ is the subgraph of a Lipschitz function $\psi\colon I_a\to I_{b/2}$.
For $0<\eta<\min\{a/4,b/4\}$ we define $u^\eta(x_1,x_2):=u(x_1,x_2-\eta)$ for $x_1\in I_a$ and $-b+\eta<x_2<\psi(x_1)+\eta$.
Let $Q:=I_{3a/4}{\times}I_{3b/4}$ and let $D:=I_{3a/4}{\times}(-\tfrac34b,-\tfrac23b)\Subset U$.
For every $0<\eps<\eta$ let $\rho_\eps$ be a convolution kernel with support contained in the ball centered at $0$ and radius $\eps$.
Finally we define $u_\eps^\eta:=u^\eta*\rho_\eps$, which is well defined in $Q\cap U$.
By Korn's inequality, there exists a positive constant $c$ depending only on $Q\cap U$ and $D$, but not on $\eps$ and $\eta$, such that
\begin{equation*}
\norma{u_\eps^\eta}_{L^2(Q\cap U;\R2)}+\norma{\nabla u_\eps^\eta}_{L^2(Q\cap U;\M{2{\times}2})}\leq c\norma{Eu_\eps^\eta}_{L^2(Q\cap U;\M{2{\times}2}_\sym)}+c\norma{u_\eps^\eta}_{L^2(D;\R2)}.
\end{equation*}
The usual formulation of Korn's inequality (see, e.g., \cite[Theorem 6.3-3]{Ciarlet}) has $Q\cap U$ instead of $D$ in the last term.
Our formulation can be easily obtained from the compact embedding of $W^{1,2}(Q\cap U;\R2)$ into $L^2(Q\cap U;\R2)$, arguing by contradiction as in the proof of the Poincar\'e inequality.

Using the fact that $Eu\in L^2(Q\cap U;\M{2{\times}2}_\sym)$ and $u\in L^2(D;\R2)$, we can pass to the limit first when $\eps\to0$ and then when $\eta\to0$ and we obtain that $u\in L^2(Q\cap U;\R2)$ and $\nabla u\in L^2(Q\cap U;\M{2{\times}2})$, hence $u\in W^{1,2}(Q\cap U;\R2)$.
\end{proof}

\noindent {\bf Acknowledgements.} 
This work was partially supported by the ERC Advanced Grant QuaDynEvoPro, \emph{``Quasistatic and Dynamic Evolution Problems in Plasticity and Fracture''}, grant agreement no.\@ 290888.
The authors are members of the Gruppo Nazionale per l'Analisi Matematica, la Probabilità e le loro Applicazioni (GNAMPA) of the Istituto Nazionale di Alta Matematica (INdAM).
M.M.\@ is a member of the Progetto di Ricerca GNAMPA-INdAM 2015 \emph{``Fenomeni critici nella meccanica dei materiali: un approccio variazionale''}. % (INdAM-GNAMPA Project 2015 ``Critical phenomena in the mechanics of materials: a variational approach'').

\bigskip
\noindent {\bf References}

\end{document}